\numberwithin{equation}{section}
\newcommand{\qua}{\quad}
\newcommand{\Hejnice}[1]{$<\!\!{#1}$-transitively Marczewski null}
\newcommand{\leqHejnice}[1]{$\leq\!\!{#1}$-transitively Marczewski null}
\theoremstyle{plain}
\newtheorem{thm}{Theorem}
\newtheorem{cor}[thm]{Corollary}
\newtheorem{ques}[thm]{Question}
\newtheorem{lem}[thm]{Lemma}
\newtheorem{obs}[thm]{Observation}
\newtheorem{claim}[thm]{Claim}
\newtheorem*{Taylor-cor}{Corollary to Taylor's Theorem}
\newtheorem*{Erdoes-Rado}{Erd\H os-Rado Theorem}
\newtheorem{propos}[thm]{Proposition}
\theoremstyle{definition}
\newtheorem{defi}[thm]{Definition}
\theoremstyle{definition}
\theoremstyle{remark}
\newcounter{enuroman}
\renewcommand{\theenuroman}{\roman{enuroman}}
\newenvironment{romanenumerate}{\begin{list}{\rm (\theenuroman)}{\usecounter{enuroman}
    \setlength{\labelwidth}{1cm}}}
   {\end{list}}
\newcounter{enuRoman}
\renewcommand{\theenuRoman}{(\Roman{enuRoman})}
\newcounter{enuAlph}
\renewcommand{\theenuAlph}{\Alph{enuAlph}}
\newcounter{enualph}
\renewcommand{\theenualph}{\alph{enualph}}
\newcounter{enuarabic}
\renewcommand{\theenuarabic}{\arabic{enuarabic}}
\newcommand{\forces}{\Vdash}
\newcommand{\rest}{{\upharpoonright}}
\newcommand{\restrict}{{\upharpoonright}}
\newcommand{\I}{{\mathcal I}}
\newcommand{\M}{{\mathcal M}}
\newcommand{\N}{{\mathcal N}}
\renewcommand{\P}{{\mathcal P}}
\newcommand{\T}{{\mathcal T}}
\newcommand{\CC}{{\mathbb C}}
\newcommand{\PP}{{\mathbb P}}
\newcommand{\RR}{{\mathbb R}}
\renewcommand{\SS}{{\mathbb S}}
\newcommand{\bb}{{\mathfrak b}}
\newcommand{\cc}{{\mathfrak c}}
\newcommand{\cov}{{\mathsf{cov}}}
\newcommand{\cof}{{\mathsf{cof}}}
\newcommand{\cf}{{\mathrm{cf}}}
\newcommand{\sub}{\subseteq}
\newcommand{\sem}{\setminus}
\newcommand{\twoom}{{}^\omega 2}
\newcommand{\twolom}{{}^{<\omega} 2}
\newcommand{\omom}{{}^\omega \omega}
\newcommand{\la}{\langle}
\newcommand{\ra}{\rangle}
\newcommand{\rrr}{r}
\newcommand{\muu}{\mu}
\newcommand{\predec}[2]{{pred(#1,#2)}}
\title{Borel Conjecture for the Marczewski ideal}
\author{J\"org Brendle \\
Graduate School of System Informatics \\
   Kobe University \\
   Rokko-dai 1-1, Nada-ku \\
   Kobe 657-8501, Japan \\
email: {\sf brendle@kobe-u.ac.jp} \\
\\
Wolfgang Wohofsky \\
Hamburg University \\
   Bundesstrasse 55 (Geomatikum) \\
   20146 Hamburg, Germany \\
email: {\sf wolfgang.wohofsky@gmx.at}}
\begin{document}
\maketitle



\begin{abstract}
We show in ZFC that there is no set
of reals
of size
continuum
which
can be translated away from every set in the Marczewski ideal.
We also show that in the Cohen model, every set with this property is countable.
\end{abstract}

\section{Introduction}\label{sec:Introduction}

\subsection{Basic definitions and historical remarks}

The results of this paper are concerned with translations of certain
special
sets of real numbers:
for technical reasons,
we are
mainly
going to work
with
the Polish group
$(\twoom, +)$
as
``the reals'',
where
$\twoom$ is the
well-known
Cantor space, and
$+$ is bitwise addition
modulo $2$;
in Section~\ref{sec:arbitrary_Polish_groups}, however, we are going to show how to generalize
the main ZFC result
to arbitrary Polish groups; in particular,
it
also
holds
for the classical real line $\RR$, the circle $S^1$, etc.

Let
$\M \sub \P(\twoom)$ and $\N \sub \P(\twoom)$
denote
the $\sigma$-ideals of
\emph{meager} sets and
\emph{(Lebesgue) measure zero} sets, respectively.

A set $X \sub \twoom$ is \emph{strong measure zero}
if for each sequence $(\varepsilon_n)_{n \in \omega}$
of positive real numbers
there is a sequence $(B_n)_{n \in \omega}$
such that for each $n \in \omega$, the diameter of
$B_n \sub \twoom$
is less than $\varepsilon_n$, and $X \sub \bigcup_{n \in \omega} B_n$.
(See~\cite[Chapter~8]{BartoszynskiJudah}
for more information on strong measure zero sets and related concepts.)

For
$X, Y \sub \twoom$ and $t \in \twoom$,
let $X + Y := \{ x + y : x \in X, y \in Y \}$,
and $X + t := \{ x + t : x \in X \}$.
Let $\I \sub \P(\twoom)$ be any collection,
e.g., an ideal such as $\M$ or $\N$.

\begin{defi}
A set $X \sub \twoom$ is
\emph{$\I$-shiftable}
if $X + Y \neq \twoom$ for each $Y \in \I$.
\end{defi}

It is easy to
check
that
a set $X$ is
$\I$-shiftable
if
and only if
it can be ``translated away'' from
each
set in $\I$
(i.e., for every $Y \in \I$ there is a $t \in \twoom$ such that $(X + t) \cap Y = \emptyset$); therefore the name.
Note that each countable set is $\I$-shiftable whenever $\I$ is a $\sigma$-ideal.
Moreover, the $\I$-shiftable sets are clearly closed under subsets, so they are a
``notion of smallness''. However, they do not necessarily form an ideal, even in ``nice'' cases such as $\I = \N$ (see~\cite{BrSh:607}).

The Galvin-Mycielski-Solovay theorem
(see~\cite{GMS})
says
that a set is strong measure zero if and only if it is $\M$-shiftable.
This gives rise to new
notions of smallness,
by replacing $\M$ by other collections.
The first example
is the following notion ``dual to strong measure zero'',
which was introduced by P\v{r}ikry:
a set $X \sub \twoom$ is \emph{strongly meager}
if it is $\N$-shiftable.

Since $\M$ and $\N$ are $\sigma$-ideals, each countable set is both strong measure zero and strongly meager.
Assuming CH (the ``Continuum Hypothesis'', i.e., $2^{\aleph_0} = \aleph_1$), there are uncountable strong measure zero
sets
(shown in~\cite{SierpinskiCH})
and uncountable strongly meager sets;
this basically follows
(via a straightforward inductive construction)
from the fact that $\M$ and $\N$ have Borel bases,
hence
bases of size continuum
(i.e., $\cof(\M) \leq \cc$ and $\cof(\N) \leq \cc$, where the \emph{cofinality $\cof(\I)$} is the smallest size of a basis of $\I$).
On the other hand,
a famous result by Laver (see~\cite{LaverConBC}) shows that
it is consistent with ZFC that the \emph{Borel Conjecture} (BC) holds:
this is the
statement
that each strong measure zero set is countable
(conjectured in~\cite{Borel});
the \emph{dual Borel Conjecture} (dBC),
i.e., the statement that each strongly meager set is countable,
is consistent with ZFC as well (as shown by Carlson, see~\cite{CarlsonCondBC});
moreover, it is consistent that BC and dBC hold simultaneously
(see~\cite{GoKrShWo:969}).
For a survey on these topics, see also~\cite{SurveyArticle}.

The purpose of this paper
is to explore
the collection of $s_0$-shiftable sets,
where $s_0$ is the Marczewski ideal (see~\cite{Miller_special}):
a
set $Y \sub \twoom$ is \emph{Marczewski null} (\emph{$Y \in s_0$})
if
for every perfect set~$P \sub \twoom$ there is a perfect
set~$Q \sub P$
with $Q \cap Y = \emptyset$.

Recall the notion of \emph{Sacks forcing} $(\SS, \leq)$,
the set of all perfect subtrees of $\twolom$
with the partial order of inclusion
(i.e.,
$q \leq p$
if
$q \sub p$).
Two elements $p_0, p_1 \in \SS$ are \emph{compatible} if there is an $r \in \SS$ with
$r \leq p_0$ and $r \leq p_1$  (otherwise they are
\emph{incompatible}).
An \emph{antichain} is a collection of pairwise incompatible elements.
Note that an antichain is \emph{maximal} if and only if every
element of $\SS$ is compatible with some element of the antichain.
For $p \in \SS$,
let $[p]$ denote the \emph{body} of~$p$, i.e., the set of branches through the tree $p$.
Note that the set of perfect
sets in
$\twoom$ is exactly
the set of bodies of perfect trees
in $\twolom$
(i.e., $\{[p] : p \in \SS \}$), and that $q \leq p$ if and only if $[q] \sub [p]$.
Therefore,
the following holds:
$Y \in s_0$
if and only if
$\forall p \in \SS \; \exists q \leq p \; [q] \cap Y = \emptyset$;
we will use this ``forcing notation'' from now on.

Since $s_0$ is a $\sigma$-ideal, each countable set is $s_0$-shiftable.
We call the statement that
each $s_0$-shiftable set is countable
``\emph{Borel Conjecture for the Marczewski ideal}'' or
\emph{Marczewski Borel Conjecture} (MBC).
In contrast to the cases above (related to $\M$ and $\N$),
there is no
obvious
way to construct an uncountable $s_0$-shiftable set
from CH (i.e., to show the failure of MBC from CH):
this is due to the fact that (in ZFC) $\cof(s_0) > \cc$
(see~\cite{JudahMillerShelah};
for a quite general investigation of the cofinalities of ideals like this, see~\cite{JoergYuriiIchCofinalities}).
In fact,
the opposite is true:
Theorem~\ref{thm:main_ZFC_result}
(see~Corollary~\ref{cor:CH_implies_MBC}) shows that
CH actually proves MBC
(and so the consistency of MBC is established).

\bigskip

\noindent {\bf Acknowledgments.}
We thank Thilo Weinert for asking whether MBC is consistent, which originated this research,
and for many inspiring discussions with the second author.

Part of the second author's PhD thesis (see~\cite[Chapter 6]{PhDWohofsky}) deals with exploring $s_0$-shiftable sets (mainly under CH),
and contains several of the results of this paper.
He wishes
to thank his advisor Martin Goldstern for many fruitful and inspiring conversations during the course of his PhD.

We also thank
Zbigniew Lipecki
for asking whether our results can be generalized to Polish groups other than $\twoom$.

Part of our work was done in November 2015
at the Isaac Newton Institute for Mathematical Sciences (INI). We thank the INI for its support and hospitality.

Last, but not least, we thank the anonymous referee for his/her careful reading and helpful suggestions.

\subsection{The theorems of the paper}

One of the main results of the paper is the ZFC theorem that there are no
$s_0$-shiftable
sets of reals
of size continuum:

\begin{thm}\label{thm:main_ZFC_result}
(ZFC) Let $X \sub \twoom$ with
$|X| = \cc$.
Then $X$ is not $s_0$-shiftable,
i.e.,
there is a $Y \in s_0$ such that $X + Y = \twoom$.
\end{thm}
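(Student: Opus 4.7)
My plan is to find $Y \in s_0$ with $X + Y = \twoom$; i.e., for every $t \in \twoom$ some $y \in Y$ lies in $X + t$. I would split on the size of $X^c := \twoom \setminus X$.

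\emph{Easy case} ($|X^c| < \cc$): Then $|X^c + X^c| \leq |X^c|^2 < \cc$, so there exists $y_1 \in \twoom \setminus (X^c + X^c)$. A direct verification gives $X^c \cap (X^c + y_1) = \emptyset$, equivalently $X \cup (X + y_1) = \twoom$; thus the finite set $Y := \{0, y_1\} \in s_0$ already works.

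\emph{Main case} ($|X^c| = \cc$): Enumerate $\twoom \times \SS = \{(t_\alpha, p_\alpha) : \alpha < \cc\}$ and construct, by recursion on $\alpha < \cc$, elements $y_\alpha \in X + t_\alpha$ and perfect trees $q_\alpha \leq p_\alpha$, so that the final set $Y := \{y_\alpha : \alpha < \cc\}$ satisfies $X + Y \supseteq \{t_\alpha : \alpha < \cc\} = \twoom$ and each $q_\alpha$ witnesses $[q_\alpha] \cap Y = \emptyset$. At stage $\alpha$, picking $q_\alpha$ with $[q_\alpha] \cap \{y_\beta : \beta \leq \alpha\} = \emptyset$ is routine: the homeomorphism $[p_\alpha] \cong \twoom \cong \twoom \times \twoom$ exhibits $\cc$ pairwise disjoint perfect subsets of $[p_\alpha]$, only $< \cc$ of which can meet the $< \cc$-sized set of previously chosen $y$'s. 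Picking $y_\alpha \in (X + t_\alpha) \setminus \bigcup_{\beta < \alpha} [q_\beta]$ requires the recursion invariant $(\star_\alpha)$: $X + t \not\subseteq \bigcup_{\beta < \alpha} [q_\beta]$ for every $t \in \twoom$.

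The main obstacle is preserving $(\star)$ through the recursion. I would strengthen it to demand $|(X+t) \setminus \bigcup_{\beta < \alpha}[q_\beta]| = \cc$ for every $t$; preserving this strong form reduces to exhibiting a perfect $q_\alpha \leq p_\alpha$ with $|X \cap ([q_\alpha] + t)| < \cc$ for every $t \in \twoom$ --- i.e., no translate of $[q_\alpha]$ meets $X$ in a set of full size. Producing such a ``translate-thin'' perfect subtree (subject to the side condition on previous $y$'s) is the hard step, and this is where the assumption $|X^c| = \cc$ is essential: the body $[q_\alpha]$ must be routed through $X^c$ generically enough to defeat all $\cc$ translation directions simultaneously, plausibly via a fusion-style argument inside $p_\alpha$. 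As a secondary route, the cited theorem $\cof(s_0) > \cc$ of Judah-Miller-Shelah yields an indirect argument in the subcase $X^c \in s_0$: if $X$ were $s_0$-shiftable, every $Y' \in s_0$ would satisfy $Y' \subseteq X^c + t_{Y'}$ for some $t_{Y'}$, so that $\{X^c + t : t \in \twoom\}$ is a subset-base for $s_0$ of cardinality $\leq \cc$ contradicting $\cof(s_0) > \cc$; combining this with the direct construction in the remaining subcase would complete the proof.
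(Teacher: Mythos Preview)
Your architecture---reduce to finding, below each $p\in\SS$, a $q\leq p$ all of whose translates meet $X$ (or a size-$\cc$ subset of $X$) in size $<\cc$, then diagonalize---is exactly the paper's, but you stop at the hard step without a construction. The suggested ``fusion through $X^c$'' cannot work: fusion handles countably many requirements, whereas here $\cc$ translations must be defeated simultaneously, and in fact the hypothesis $|X^c|=\cc$ plays no role in the actual proof. The paper's key device is the \emph{skew tree}: a $p\in\SS$ with at most one splitting node per level. The crucial combinatorial fact is that $|[p]\cap([p]+t)|\leq 2$ for every $t\neq 0$; skew trees are dense and translation-invariant in $\SS$. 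After first passing to a size-$\cc$ subset $X'\subseteq X$ with $X'\in s_0$ (itself a nontrivial lemma: there is no generalized Luzin set for $s_0$), one argues by cases: if every skew tree meets $X'$ in size $<\cc$, then any skew $q\leq p_\alpha$ is automatically translate-thin; otherwise some skew $p$ has $|[p]\cap X'|=\cc$, and then $X'':=[p]\cap X'$ is translate-thin below \emph{every} tree (using the $\leq 2$-intersection property together with $X''\in s_0$). Either way one obtains the dense translation-invariant family of trees your recursion needs.

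Two further points. First, when $\cc$ is singular your strengthened invariant $|(X+t)\setminus\bigcup_{\beta<\alpha}[q_\beta]|=\cc$ can fail at limit stages even if each $q_\beta$ is $<\cc$-translate-thin, since fewer than $\cc$ sets each of size $<\cc$ may have union of size $\cc$; the paper devotes a separate, more delicate lemma to producing $X'$ that is $\leq\mu$-translate-thin for some fixed $\mu<\cc$. Second, your $\cof(s_0)>\cc$ route is correct and elegant for the subcase $X^c\in s_0$, but the remaining subcase $X^c\notin s_0$ is precisely where the real work lies, and nothing in the proposal addresses it.
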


Section~\ref{sec:ZFC_result} is devoted to the proof of this theorem.

\begin{cor}\label{cor:CH_implies_MBC}
Assume CH;
then
MBC holds, i.e.,
the collection of $s_0$-shiftable sets is exactly the ideal of countable sets of reals.
In particular, MBC is consistent.
\end{cor}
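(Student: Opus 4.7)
The plan is to derive the corollary directly from Theorem~\ref{thm:main_ZFC_result} as an essentially one-line consequence, with CH supplying the appropriate cardinal arithmetic. Note first that the ``$\supseteq$'' direction of the claimed equality is free: since $s_0$ is a $\sigma$-ideal, every countable set is $s_0$-shiftable, as already observed in the introduction.

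For the nontrivial inclusion, I would argue by contradiction. Suppose $X \subseteq \twoom$ is $s_0$-shiftable but uncountable. Under CH we have $\cc = \aleph_1$, hence $|X| \geq \aleph_1 = \cc$; on the other hand $|X| \leq |\twoom| = \cc$, so pick any $X' \subseteq X$ with $|X'| = \cc$ (for instance $X$ itself, or any $\aleph_1$-sized subset). From the elementary observation recorded right after the definition of $\I$-shiftable sets, the $s_0$-shiftable sets are closed under subsets, so $X'$ is $s_0$-shiftable. But Theorem~\ref{thm:main_ZFC_result} says no subset of $\twoom$ of size $\cc$ is $s_0$-shiftable, a contradiction.

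This establishes MBC under CH. For the ``In particular'' clause, consistency of MBC follows immediately since CH is consistent with ZFC (and hence so is any consequence of CH).

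There is no real obstacle here; the entire content of the corollary lies in Theorem~\ref{thm:main_ZFC_result}, and the step that might look subtle is simply the reminder that $s_0$-shiftability is downward hereditary, which reduces the uncountable case under CH to the size-$\cc$ case handled by the theorem.
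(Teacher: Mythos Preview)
Your argument is correct and matches the paper's treatment: the corollary is stated without proof there, as an immediate consequence of Theorem~\ref{thm:main_ZFC_result}, and your derivation spells out exactly the intended reasoning (under CH any uncountable set has size $\cc$, so Theorem~\ref{thm:main_ZFC_result} applies; downward closure of $s_0$-shiftability handles the subset step).
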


In the Sacks model, $\cov(s_0) = \cc = \aleph_2$ holds true (see~\cite[Theorem~1.2]{JudahMillerShelah}), i.e., less than
$\cc$
Marczewski null sets do not cover all the reals; it easily follows that any set of size less than $\cc$ is $s_0$-shiftable.
Moreover, $\cov(s_0) = \cc$ is also known to be consistent with
$\cc$
arbitrarily large (see~\cite[Theorem~1 and Proposition~3]{Velickovic_ccc_perfect}).
From this together with Theorem~\ref{thm:main_ZFC_result}
we get that
it is consistent with arbitrarily large continuum
that the collection of $s_0$-shiftable sets
is exactly the ideal of sets of reals of size $< \cc$.

In Section~\ref{sec:arbitrary_Polish_groups}, we demonstrate how to generalize
Theorem~\ref{thm:main_ZFC_result}
to arbitrary Polish groups.

In Section~\ref{sec:MBC_Cohen}, we show that
MBC is consistent with $\lnot$CH, i.e., continuum larger than~$\aleph_1$ (see~Corollary~\ref{cor:MBC_in_the_Cohen_model}):

\begin{thm}\label{thm:MBC_in_Cohen}
MBC holds
in the
Cohen
model
(more precisely: after adding $\kappa$
Cohen reals to a model of GCH,
where
$\kappa \geq \omega_2$
with
$\cf (\kappa) \geq \omega_1$).
\end{thm}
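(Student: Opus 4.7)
The plan is to combine Theorem~\ref{thm:main_ZFC_result} applied in an intermediate Cohen extension with a $\kappa$-length extension argument. First, I reduce to the case $|X|=\aleph_1$: if $X_0 \subseteq X$ is uncountable and $X_0 + Y_0 = \twoom$ with $Y_0 \in s_0$, then $X + Y_0 \supseteq X_0 + Y_0 = \twoom$, so $X$ is not $s_0$-shiftable either. Next, by the countable chain condition for Cohen forcing (nice names have countable support), $X \in V[G_A]$ for some $A \subseteq \kappa$ with $|A|=\aleph_1$. Since $V \models$ GCH, $\cc^{V[G_A]} = \aleph_1 = |X|$, so Theorem~\ref{thm:main_ZFC_result} applied in $V[G_A]$ supplies $Y' \in s_0^{V[G_A]}$ with $X + Y' = \twoom \cap V[G_A]$ and $|Y'| \leq \aleph_1$.

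The hypothesis $\cf(\kappa) \geq \omega_1$ gives $\cc^{V[G]} = \kappa$. Since $|Y'| < \cc^{V[G]}$, the set $Y'$ remains Marczewski null in $V[G]$: any $V[G]$-perfect tree $p$ has $\cc^{V[G]}$-many pairwise disjoint perfect subtrees (via $[p] \cong \twoom \cong \twoom \times \twoom$), of which at most $\aleph_1$ meet $Y'$, so some perfect $q \leq p$ is disjoint from $Y'$. The main task is to extend $Y'$ to some $Y \supseteq Y'$ with $Y \in s_0^{V[G]}$ and $X + Y \supseteq \twoom \cap V[G]$---covering the $\kappa$-many new reals of $\twoom \cap V[G] \setminus V[G_A]$ by shifts of $X$, while keeping $Y$ Marczewski null despite $|Y|$ potentially reaching $\cc$.

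I carry this out by transfinite recursion of length $\kappa$ in $V[G]$. Enumerating the new reals as $\{z_\xi : \xi < \kappa\}$ and the $V[G]$-perfect trees as $\{p_\eta : \eta < \kappa\}$, at stage $\xi$ I would pick $y_\xi = z_\xi + x_\xi$ for some $x_\xi \in X$ and $q_\xi \leq p_\xi$ with $[q_\xi] \cap (Y' \cup \{y_\zeta : \zeta \leq \xi\}) = \emptyset$. The choice of $q_\xi$ is handled by the partition argument above, as the avoidance set has size $< \kappa$. The main obstacle is ensuring $(z_\xi + X) \not\subseteq \bigcup_{\zeta<\xi}[q_\zeta]$ at every stage so that a valid $y_\xi$ exists; since $|z_\xi + X| = \aleph_1$ while $|\bigcup_{\zeta<\xi}[q_\zeta]|$ may be as large as $\cc$, this is nontrivial.

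To overcome the obstacle I would exploit Cohen-genericity. By suitable bookkeeping, each $q_\zeta$ can be arranged to live in an intermediate model $V[G_{A_\zeta}]$ with $|A_\zeta| \leq \aleph_1$, and $[q_\zeta]$ can be chosen to have empty interior, hence to be nowhere dense (and meager) in $V[G_{A_\zeta}]$. If $z_\xi$ is Cohen-generic over $V[G_{A_\xi^*}]$ where $A_\xi^* = A \cup \bigcup_{\zeta<\xi} A_\zeta$ (of size $<\kappa$, so fresh Cohen coordinates are available), then for every $x \in X \subseteq V[G_{A_\xi^*}]$ the translate $[q_\zeta] + x$ is meager in $V[G_{A_\xi^*}]$, and $z_\xi$ avoids it; hence $z_\xi + x \notin [q_\zeta]$ for all $x \in X$ and $\zeta<\xi$, so any $x_\xi \in X$ serves. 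The subtle part of the plan is organizing the enumeration so that every new real is accessible as such a Cohen-generic $z_\xi$---likely by processing reals in batches indexed by their minimal support in $\kappa \setminus A$, using a fresh Cohen coordinate to handle each batch, and absorbing any residual non-Cohen reals into an auxiliary Marczewski null set of small cardinality.
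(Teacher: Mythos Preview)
Your reduction to $|X|=\aleph_1$, the application of Theorem~\ref{thm:main_ZFC_result} inside $V[G_A]$, and the observation that $Y'$ stays in $s_0$ in $V[G]$ by cardinality are all fine. The gap is in the last paragraph. Being \emph{new} over an intermediate model $V[G_{A_\xi^*}]$ is much weaker than being Cohen-generic over it: if $c$ is a fresh Cohen coordinate and $z$ is defined by $z(2n)=c(n)$, $z(2n{+}1)=0$, then $z \in V[G]\setminus V[G_{A_\xi^*}]$ but $z$ lies in the ground-model nowhere dense set $\{w:\forall n\; w(2n{+}1)=0\}$. So your conclusion that $z_\xi$ avoids every translate $[q_\zeta]+x$ simply fails for such $z_\xi$. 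Your proposed remedies do not help: no enumeration can arrange that every real is Cohen over the stage-$\xi$ model (the real above is never Cohen over any $V[G_B]$ with $0\notin B$), and the set of ``non-Cohen'' reals has size $\kappa$, so it cannot be absorbed into an auxiliary set of size $<\cc$. Nor is there a fallback argument for the weaker statement you actually need, namely $\exists x\in X\;\forall\zeta<\xi\; z_\xi+x\notin[q_\zeta]$: the right-hand side is a union of up to $|\xi|$ closed nowhere dense sets, which can certainly swallow the fixed $\aleph_1$-sized set $z_\xi+X$.

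The paper takes a different route that avoids this obstacle entirely. Rather than diagonalizing $Y$ pointwise, it first builds a dense ``$G$-matrix'' $\bar\T$ of skew Sacks trees, stratified along an $\omega_1$-scale of splitting speeds, with the purely combinatorial consequence that for \emph{any} perfect tree $S$ only countably many pairs $(x,T)\in G\times\T$ have $x+T$ somewhere dense in $S$. For a new real $z$ one applies this to the perfect \emph{tree of possibilities} $T_{\dot z,p}$ for each of the countably many conditions $p$: since only countably many $(x,T)$ are bad and $|X|\geq\aleph_1$, some $x_z\in X$ forces $z\notin x_z+[T]$ for every $T\in\T$. One then takes $Y:=\twoom\setminus\bigcup_{T\in\T}[T]$. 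Cohen genericity is used only to extend the matrix (adding a perfect tree of Cohen branches below any given Sacks tree), not to analyze the target reals $z$; this is precisely what sidesteps the issue your approach runs into.
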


\begin{ques}
Is it consistent that
being
$s_0$-shiftable not only depends on the size?
Is it even consistent that
the collection of $s_0$-shiftable sets does not form an ideal?
\end{ques}

\subsection{Preliminary facts about Marczewski null sets}

Splitting
a
perfect set into ``perfectly many'' (hence continuum many) disjoint perfect sets easily yields

\begin{propos}\label{propos:small_sets_in_s_0}
Let $Y \sub \twoom$ with $|Y| < \cc$. Then $Y \in s_0$.
\end{propos}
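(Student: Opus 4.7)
The plan, following the hint of the proposition, is to prove that for every perfect $P \sub \twoom$ there exists a perfect $Q \sub P$ with $Q \cap Y = \emptyset$. The idea is to partition (a perfect subset of) $P$ into $\cc$ many pairwise disjoint perfect pieces $\{P_t : t \in \twoom\}$; since $|Y| < \cc$ and the pieces are disjoint, each $y \in Y$ lies in at most one $P_t$, so the set of $t$ with $P_t \cap Y \neq \emptyset$ has size at most $|Y| < \cc$. Hence some $t \in \twoom$ satisfies $P_t \cap Y = \emptyset$, and we take $Q = P_t$.

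To produce the partition, fix $p \in \SS$ with $[p] = P$ and build, by recursion on $\sigma \in (2 \times 2)^{<\omega}$, a labeling $\sigma \mapsto s_\sigma \in p$ such that $s_\sigma \subsetneq s_{\sigma \ha (i,j)}$ for every $(i,j) \in 2 \times 2$, and such that the four extensions $\{s_{\sigma \ha (i,j)} : (i,j) \in 2 \times 2\}$ are pairwise incomparable. This is routine: every node of the perfect tree $p$ has a splitting extension in $p$, and iterating the splitting once more inside each branch yields four pairwise incomparable extensions still lying in $p$. The labeling then induces a continuous injection $\pi : (2 \times 2)^\omega \to [p]$ via $\pi(x) = \bigcup_n s_{x \rest n}$, and identifying $(2 \times 2)^\omega$ with $\twoom \times \twoom$ we obtain a continuous injection $\pi : \twoom \times \twoom \to P$.

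Setting $P_t := \pi(\{t\} \times \twoom)$ for each $t \in \twoom$, the family $\{P_t : t \in \twoom\}$ consists of $\cc$ pairwise disjoint perfect subsets of $P$: each $P_t$ is the continuous injective image of the compact perfect space $\{t\} \times \twoom$ (which is homeomorphic to $\twoom$), hence, since continuous injections from compact spaces to Hausdorff spaces are homeomorphisms onto their image, $P_t$ is itself a homeomorphic copy of $\twoom$; and the slices are pairwise disjoint by injectivity of $\pi$. This completes the argument as described in the first paragraph. The only step requiring any care is the doubled-splitting construction used to obtain $\pi$, but that is an entirely standard one-level perfect-tree argument, so there is really no serious obstacle.
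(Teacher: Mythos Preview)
Your argument is correct and follows exactly the approach the paper indicates: the paper's proof is merely the one-line remark that splitting a perfect set into ``perfectly many'' (hence continuum many) disjoint perfect sets easily yields the result, and you have supplied the standard details of that splitting via the doubled-splitting Cantor scheme.
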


Note that the following is a straightforward consequence of Proposition~\ref{propos:small_sets_in_s_0}:
\begin{equation}\label{eqn:equivalent_to_s_0}
Y \in s_0 \iff \forall p \in \SS \qua \exists q \leq p \qua |[q] \cap Y| < \cc;
\end{equation}
in other words,
replacing
``disjoint''
by ``small intersection''
does not change the
notion
of
Marczewski null.

The (proof of the) following theorem
(see also~\cite[Lemma 6.3]{PhDWohofsky}, as well as~\cite[Theorem 5.10]{Miller_special} for a different proof)
is the blueprint for
the Lemmas~\ref{lem:Raach} and~\ref{lem:Cambridge} below:

\begin{thm}\label{thm:thilo}
There is a set $Y \in s_0$ with $|Y| = \cc$.
\end{thm}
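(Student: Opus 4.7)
My plan is to prove this by a transfinite recursion of length $\cc$ that simultaneously constructs the set $Y$ and, for each perfect tree $p_\alpha$, an explicit witness $q_\alpha \leq p_\alpha$ to Marczewski-nullity. Enumerating $\SS = \{p_\alpha : \alpha < \cc\}$, at each stage $\alpha < \cc$ I will build a perfect subtree $q_\alpha \leq p_\alpha$ together with a point $y_\alpha \in \twoom$, maintaining three invariants: (I) $y_\alpha$ is distinct from every $y_\beta$ with $\beta < \alpha$; (II) $[q_\alpha] \cap \{y_\beta : \beta < \alpha\} = \emptyset$; and (III) $y_\alpha \notin \bigcup_{\beta < \alpha} [q_\beta]$.

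Setting $Y := \{y_\alpha : \alpha < \cc\}$, invariant (I) immediately yields $|Y| = \cc$. For each fixed $\alpha$ one has $[q_\alpha] \cap Y \sub \{y_\alpha\}$: earlier points are ruled out by (II), while every later $y_\gamma$ avoids $[q_\alpha]$ by (III) applied at stage $\gamma$ (since then $[q_\alpha] \sub \bigcup_{\beta<\gamma}[q_\beta]$). Thus $|[q_\alpha] \cap Y| \leq 1 < \cc$, so the equivalent formulation~(\ref{eqn:equivalent_to_s_0}) of Marczewski-nullity gives $Y \in s_0$; since $p_\alpha$ ranges over all of $\SS$, every perfect tree receives a witness.

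The recursion step at stage $\alpha$ first chooses $q_\alpha$ to satisfy (II) and then $y_\alpha$ to satisfy (I) and (III). The first is straightforward via the standard fact that every perfect tree in $\twoom$ contains $\cc$ pairwise disjoint perfect subtrees: at most $|\{y_\beta : \beta < \alpha\}| < \cc$ of them are forbidden by (II), so $\cc$ admissible candidates remain. To then produce $y_\alpha$ I need the residual $R_\alpha := \twoom \setminus \bigcup_{\beta < \alpha} [q_\beta]$ to have cardinality larger than $|\alpha|$. At successor stages this can be arranged by an extra counting argument among the $\cc$ disjoint candidates $\{[q_\alpha^\xi] : \xi < \cc\}$: at most one $\xi$ can satisfy $|R_\alpha \setminus [q_\alpha^\xi]| < \cc$ (otherwise two disjoint $[q_\alpha^{\xi_1}], [q_\alpha^{\xi_2}]$ would each ``almost contain'' $R_\alpha$, forcing $|R_\alpha| < \cc$), so any other admissible $\xi$ preserves $|R_{\alpha+1}| = \cc$.

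The main obstacle will be the behaviour of the recursion at limit ordinals, especially those of uncountable cofinality: the residuals $R_\alpha$ form a decreasing chain of sets of size $\cc$ whose intersection could in principle collapse below $\cc$. I plan to address this by strengthening the invariant so as to carry along a perfect ``reserve'' set $C_\alpha \sub R_\alpha$, and arranging at each successor stage, via a fusion-style bookkeeping, that $q_\alpha$ is chosen so that $C_{\alpha+1} \sub C_\alpha \setminus [q_\alpha]$ is again perfect and tracks enough splitting nodes to survive limits. Controlling this reserve through all limit stages is the substantive technical content and is presumably what makes the argument serve as the ``blueprint'' for Lemmas~\ref{lem:Raach} and~\ref{lem:Cambridge}.
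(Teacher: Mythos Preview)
Your diagnosis of the obstacle is correct, but the proposed cure does not work, and the detour is unnecessary. Fusion is a countable-length device: it lets you thread a perfect set through an $\omega$-sequence of conditions by freezing finitely many splitting levels at each step. Once $\cc > \aleph_1$, your recursion has limit stages of cofinality~$\omega_1$ (and higher), and there is no mechanism that guarantees a decreasing $\omega_1$-chain of perfect reserves $C_\alpha$ retains a perfect---or even nonempty---intersection. Any bookkeeping that freezes only finitely many splitting levels per step will have stabilised the whole tree long before stage~$\omega_1$, after which you can no longer shrink $C_\alpha$ to avoid the next~$[q_\alpha]$; and if $[p_\alpha] \sub C_\alpha$ you \emph{must} shrink. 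So the plan as written has a genuine gap at uncountable-cofinality limits.

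The paper avoids the residual problem altogether by a single structural choice: rather than enumerating all of~$\SS$ and hunting for points in the shrinking complement $R_\alpha$, one fixes a maximal antichain $\{q_\alpha : \alpha < \cc\} \sub \SS$ in advance and picks $y_\alpha \in [q_\alpha] \setminus \bigcup_{\beta<\alpha}[q_\beta]$. Incompatibility gives $|[q_\alpha]\cap[q_\beta]| \leq \aleph_0$ for $\beta \neq \alpha$, so at stage~$\alpha$ one removes fewer than~$\cc$ countable sets from the size-$\cc$ set~$[q_\alpha]$, and something always remains---no limit-stage issue can arise because the ``room'' is found inside $[q_\alpha]$, not in a global residual. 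Maximality then supplies the witness for every $p \in \SS$: some $q_\alpha$ is compatible with~$p$, and any common extension $q \leq p, q_\alpha$ meets $Y$ in at most $\{y_\gamma : \gamma \leq \alpha\}$. This antichain trick, not a fusion argument, is the actual blueprint carried into Lemmas~\ref{lem:Raach} and~\ref{lem:Cambridge}.
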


\begin{proof}[Proof (Sketch)]
Fix a maximal antichain $\{q_\alpha : \alpha < \cc \} \sub \SS$.
Since $|[q_\alpha] \cap [q_\beta]| \leq \aleph_0$ for any $\alpha \neq \beta$,
we can pick $y_\alpha \in [q_\alpha] \setminus \bigcup_{\beta < \alpha} [q_\beta]$ for each $\alpha < \cc$.
Then
$Y := \{y_\alpha : \alpha < \cc \}$ is of size $\cc$,
and the
maximality of the antichain (i.e., for each $p \in \SS$, there is $q \leq p$ and $\alpha < \cc$ with $q \leq q_\alpha$)
and~\eqref{eqn:equivalent_to_s_0} together yield $Y \in s_0$.
\end{proof}

\section{The ZFC result}\label{sec:ZFC_result}

This section is devoted to the proof of Theorem~\ref{thm:main_ZFC_result}, saying that a set $X$ of size continuum cannot be $s_0$-shiftable.

Our
strategy is as follows:
we first reduce the problem of proving ``$X$ is not $s_0$-shiftable'' to finding a dense and translation-invariant set $D \sub \SS$ with the property that fewer than
$\cc$
trees
from
$D$ do not cover $X$ (see Lemma~\ref{lem:Raach} below);
given a
set $X$ of size $\cc$, we then
show
how to thin it out to a subset $X' \sub X$ of size~$\cc$ in such a way that $X'$ admits a set $D$ with the
aforementioned
properties.

\subsection{Reduction to the set $D$}

\begin{lem}\label{lem:Raach}
Let $X \sub \twoom$, and let $D \sub \SS$ be a
collection of Sacks trees with the following properties:

\begin{enumerate}[(a)]
\item\label{eq:density} $D$ is dense (i.e., for each $p \in \SS$, there is a $q \leq p$ with $q \in D$),

\item\label{eq:translation_invariance} $D$ is translation-invariant (i.e., for each $p \in \SS$ and $t \in \twoom$,
$p \in D$ if and only if
$p + t = \{ \sigma + t \rest |\sigma| : \sigma \in p \} \in D$),

\item\label{eq:no_cover_by_fewer_than_c} fewer than
$\cc$
trees from $D$ do not cover $X$
(i.e.,
if
$\{p_\alpha : \alpha < \mu\} \sub D$
with $\mu < \cc$,
then
$X \nsubseteq \bigcup_{\alpha < \mu} [p_\alpha]$).
\end{enumerate}
Then there is a $Y \in s_0$ such that $X + Y = \twoom$
(in other words, $X$ is not $s_0$-shiftable).
\end{lem}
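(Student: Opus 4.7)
The plan is to build $Y = \{y_\alpha : \alpha < \cc\}$ by transfinite recursion of length $\cc$. Fix enumerations $\SS = \{p_\alpha : \alpha < \cc\}$ and $\twoom = \{t_\alpha : \alpha < \cc\}$. At stage $\alpha$ I will pick a point $y_\alpha$ witnessing that the translate $X + t_\alpha$ intersects $Y$ (this is what forces $X + Y = \twoom$), and simultaneously pick a tree $q_\alpha \leq p_\alpha$ in $D$ which is meant to serve as the $s_0$-witness for $p_\alpha$, i.e., eventually $[q_\alpha] \cap Y = \emptyset$. The delicate point, which will be the main obstacle, is the mutual interference of the two diagonalizations: I need $y_\alpha$ to avoid every $[q_\beta]$ chosen so far, and I need $[q_\alpha]$ to avoid every $y_\beta$ chosen so far (and all later ones).

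The choice of $q_\alpha$: having the earlier $y_\beta$'s in hand, set $F_\alpha := \{y_\beta : \beta < \alpha\}$. Since $|F_\alpha| < \cc$, Proposition~\ref{propos:small_sets_in_s_0} gives $F_\alpha \in s_0$, so there exists $q'_\alpha \leq p_\alpha$ with $[q'_\alpha] \cap F_\alpha = \emptyset$. Density (a) of $D$ then produces $q_\alpha \leq q'_\alpha$ with $q_\alpha \in D$. This $q_\alpha$ lies in $D$, refines $p_\alpha$, and $[q_\alpha]$ avoids every previously chosen $y_\beta$.

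The choice of $y_\alpha$: I want $y_\alpha \in X + t_\alpha$, that is, $y_\alpha = x_\alpha + t_\alpha$ for some $x_\alpha \in X$, with $y_\alpha \notin [q_\beta]$ for all $\beta \leq \alpha$. Using $[q_\beta + t_\alpha] = [q_\beta] + t_\alpha$, this is equivalent to $x_\alpha \notin [q_\beta + t_\alpha]$ for every $\beta \leq \alpha$. Here is where translation-invariance (b) enters: each $q_\beta + t_\alpha$ is again in $D$, and the family $\{q_\beta + t_\alpha : \beta \leq \alpha\}$ has size $|\alpha|+1 < \cc$. Property (c) therefore guarantees that $X \nsubseteq \bigcup_{\beta \leq \alpha} [q_\beta + t_\alpha]$; pick any $x_\alpha$ in the non-empty complement and set $y_\alpha := x_\alpha + t_\alpha$.

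Setting $Y := \{y_\alpha : \alpha < \cc\}$, both requirements are met. For $X + Y = \twoom$: given any $t \in \twoom$, write $t = t_\alpha$; then $t_\alpha = x_\alpha + y_\alpha \in X + Y$. For $Y \in s_0$: given $p \in \SS$, write $p = p_\alpha$; then $q_\alpha \leq p_\alpha$ is in $\SS$, and $[q_\alpha] \cap Y = \emptyset$ because earlier $y_\beta$ (with $\beta < \alpha$) are excluded by the choice of $q_\alpha$ from $F_\alpha$, and later or equal $y_\beta$ (with $\beta \geq \alpha$) are excluded by the construction at their own stage, which explicitly avoided $\bigcup_{\gamma \leq \beta} [q_\gamma] \supseteq [q_\alpha]$. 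The translation-invariance of $D$ is what makes the second diagonalization feasible, and Proposition~\ref{propos:small_sets_in_s_0} is what prevents the two diagonalizations from colliding.
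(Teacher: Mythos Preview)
Your proof is correct, but it organizes the diagonalization differently from the paper. The paper fixes, \emph{before} the recursion begins, a maximal antichain $\{q_\alpha : \alpha < \cc\}$ inside $D$; only the points $x_\alpha$ (and hence $y_\alpha = x_\alpha + z_\alpha$) are then chosen recursively, with $x_\alpha \in X \setminus \bigcup_{\beta < \alpha}(z_\alpha + [q_\beta])$. To verify $Y \in s_0$, the paper does not get literal disjointness: given $p \in \SS$, density gives $p' \leq p$ in $D$, maximality of the antichain gives a compatible $q_\alpha$, and then one only obtains $[q] \cap Y \subseteq \{y_\gamma : \gamma \leq \alpha\}$, which has size $< \cc$; the conclusion $Y \in s_0$ then comes from the characterization~\eqref{eqn:equivalent_to_s_0}. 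Your version instead enumerates all of $\SS$ and builds the witnesses $q_\alpha$ dynamically, using Proposition~\ref{propos:small_sets_in_s_0} at each stage to dodge the finitely-so-far $y_\beta$'s; this yields genuine disjointness $[q_\alpha] \cap Y = \emptyset$ and avoids both the maximal-antichain step and the appeal to~\eqref{eqn:equivalent_to_s_0}. The trade-off is a two-track recursion rather than a single one. Either way, properties (b) and (c) are used identically, to guarantee that the translated family $\{q_\beta + t_\alpha : \beta \leq \alpha\}$ stays in $D$ and fails to cover $X$.
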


\begin{proof}
Let $\twoom = \{ z_\alpha : \alpha < \cc \}$ be an enumeration of the reals,
and fix
a maximal
antichain $\{q_\alpha : \alpha < \cc \}$
inside $D$.

By induction on $\alpha < \cc$, we
pick
\begin{equation}\label{eq:picking_x_alpha}
x_\alpha \in X \setminus \bigcup_{\beta < \alpha} (z_\alpha + [q_\beta]);
\end{equation}
this is possible, since
$z_\alpha + q_\beta \in D$
for each $\beta < \alpha$
(see property~\ref{eq:translation_invariance}),
and $X$ cannot be covered by fewer than $\cc$
such sets
(see property~\ref{eq:no_cover_by_fewer_than_c}).

Now let $y_\alpha := x_\alpha + z_\alpha$ for each $\alpha < \cc$,
and define
$Y := \{y_\alpha : \alpha < \cc\}$.
It is clear by construction that
$X + Y = \twoom$
(for each $\alpha < \cc$, we have $x_\alpha \in X$, $y_\alpha \in Y$,
and $z_\alpha = x_\alpha + y_\alpha$).

So it remains to show that
$Y \in s_0$.
Fix $p \in \SS$;
by~\eqref{eqn:equivalent_to_s_0},
it is enough to find a $q \leq p$
such that
$|[q] \cap Y| < \cc$.
Since $D$ is dense (see property~\ref{eq:density} above),
there is $p' \leq p$ with $p' \in D$.
By maximality of the antichain
$\{q_\alpha : \alpha < \cc \}$ within $D$,
we can fix
an $\alpha < \cc$ such that
$q_\alpha$ is compatible with $p'$, i.e.,
we can pick $q \leq p'$ with $q \leq q_\alpha$.
Note that for each $\gamma > \alpha$,
we have $x_\gamma \notin (z_\gamma + [q_\alpha])$ (see~\eqref{eq:picking_x_alpha}),
so $y_\gamma = x_\gamma + z_\gamma \notin [q_\alpha]$,
hence
$y_\gamma \notin [q]$.
Therefore
$[q] \cap Y \sub \{ y_\gamma : \gamma \leq \alpha \}$,
hence
$|[q] \cap Y| < \cc$, as desired.
\end{proof}

\subsection{Transitive versions of being Marczewski null}

We now introduce ``transitive'' versions of being Marczewski null.
Sets
with these
properties will
yield dense sets~$D$ as
needed
in Lemma~\ref{lem:Raach}.

Recall
the
characterization of being
Marczewski null
from~\eqref{eqn:equivalent_to_s_0}.
Requiring that
not only the body of the tree $q$
itself, but also all its translates are ``almost disjoint'' from the set in question, yields transitive versions of the notion of Marczewski null:

\begin{defi}\label{defi:Hejnice}
A set $Y \sub \twoom$ is \emph{\Hejnice{\mu}}
if
\begin{displaymath}
\forall p \in \SS \qua \exists q \leq p \qua
\forall t \in \twoom \qua |([q] + t) \cap Y| < \mu.
\end{displaymath}
Analogously,
a set $Y \sub \twoom$ is \emph{\leqHejnice{\mu}}
if
\begin{displaymath}
\forall p \in \SS \qua \exists q \leq p \qua
\forall t \in \twoom \qua |([q] + t) \cap Y| \leq \mu.
\end{displaymath}
\end{defi}

In~\eqref{eqn:equivalent_to_s_0}, the cardinal $\cc$ can be replaced by any smaller cardinal
without changing the notion;
in Definition~\ref{defi:Hejnice}, however,
the situation is not so clear:

\begin{ques}
To which extent
do
the above notions
depend on $\mu$?
Are
they
strictly stronger than
Marczewski null?
\end{ques}

Requiring disjointness would be definitely too much in the transitive case:
the empty set is the only set which is \leqHejnice{0}.
Typical instances (which we are going to use) are
\Hejnice{\cc} and \leqHejnice{\aleph_0}.
On the other hand,
even finite instances
might
be
worth to consider,
i.e.,
\leqHejnice{\mu} for $\mu \in \omega$.

To finish the proof of Theorem~\ref{thm:main_ZFC_result}, we will proceed as follows: given a set $X$ of size $\cc$, we will show how to find a subset $X' \sub X$
of size $\cc$
which is
\Hejnice{\cc} (see Lemma~\ref{lem:main_lemma_c_regular} below).
In case that $\cc$ is regular, this is sufficient to yield an appropriate dense set $D$ for Lemma~\ref{lem:Raach}, finishing the proof of the theorem.

If $\cc$ is singular, Lemma~\ref{lem:main_lemma_c_regular} might be insufficient, but in this case we are able to strengthen the
(conclusion of the) lemma (see Lemma~\ref{lem:main_lemma_c_singular}) such that we can again obtain a dense $D$
as required in Lemma~\ref{lem:Raach} which finishes the proof of the theorem in ZFC.

\subsection{Luzin sets}

Recall the following classical notions (see~\cite[Definition~8.2.1]{BartoszynskiJudah}): an uncountable set $X \sub \twoom$ is \emph{Luzin}
if
$X \cap M$ is countable for
any meager set $M \in \M$
(such sets exist, e.g., under~CH);
more generally,
a set $X \sub \twoom$
with $|X| = \cc$
is
\emph{generalized Luzin}
if
$|X \cap M| < \cc$ for any $M \in \M$;
furthermore,
a set $X \sub \twoom$
with $|X| = \cc$
is
\emph{generalized Sierpi\'{n}ski}
if
$|X \cap N| < \cc$ for any measure zero set $N \in \N$.

The following lemma says (in ZFC) that there are no
such sets
with respect to the
Marczewski ideal~$s_0$:

\begin{lem}\label{lem:Cambridge}
Let $X \sub \twoom$ with $|X| = \cc$.
Then there exists an $X' \sub X$ with $|X'| = \cc$
such that
$X' \in s_0$.
\end{lem}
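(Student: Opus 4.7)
The plan is to distinguish cases by whether $X$ is itself Marczewski null. If $X \in s_0$, simply set $X' := X$; since $|X| = \cc$, we are done. So assume $X \notin s_0$. By the characterization~\eqref{eqn:equivalent_to_s_0}, the failure of $X \in s_0$ yields a $p^* \in \SS$ such that $|[q] \cap X| = \cc$ for every $q \leq p^*$ (using also $|X| = \cc$ for the upper bound). The key idea is that below $p^*$ the set $X$ is uniformly large, so I can imitate the proof of Theorem~\ref{thm:thilo} while drawing the sample points from $X$ itself.

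Fix a maximal antichain $\{q_\alpha : \alpha < \cc\}$ below $p^*$ of size $\cc$; this can be obtained, e.g., by partitioning $[p^*]$ into continuum many pairwise disjoint perfect pieces and taking each as a $q_\alpha$. By recursion on $\alpha < \cc$, pick
\[
y_\alpha \in (X \cap [q_\alpha]) \sem \bigcup_{\beta < \alpha} [q_\beta].
\]
Such a choice is possible: since $q_\alpha \leq p^*$, we have $|X \cap [q_\alpha]| = \cc$; and for $\beta \neq \alpha$ the closed set $[q_\alpha] \cap [q_\beta]$ contains no perfect subset (by incompatibility of $q_\alpha, q_\beta$), hence is countable by Cantor--Bendixson, so the ``forbidden'' portion of $[q_\alpha]$ has total size at most $|\alpha| \cdot \aleph_0 < \cc$ (note this holds even when $\cc$ is singular). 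Define $X' := \{y_\alpha : \alpha < \cc\}$; then $X' \sub X \cap [p^*]$ and $|X'| = \cc$.

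To verify $X' \in s_0$ using~\eqref{eqn:equivalent_to_s_0}, I need, given $p \in \SS$, to produce $q \leq p$ with $|[q] \cap X'| < \cc$. If $p$ is incompatible with $p^*$, then by the same Cantor--Bendixson remark $[p] \cap [p^*]$ is countable, and since $X' \sub [p^*]$ the choice $q := p$ works. Otherwise pick any $p' \leq p, p^*$; by maximality of $\{q_\alpha\}$ below $p^*$, $p'$ is compatible with some $q_\alpha$, so choose $q \leq p', q_\alpha$. The construction gives $y_\gamma \notin [q_\alpha] \supseteq [q]$ for every $\gamma > \alpha$, hence $[q] \cap X' \sub \{y_\beta : \beta \leq \alpha\}$, of size $< \cc$. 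The one conceptual step I expect to need care with is the opening dichotomy: once the ``rich'' zone $[p^*]$ is isolated via~\eqref{eqn:equivalent_to_s_0}, the remainder is essentially Theorem~\ref{thm:thilo} relativized to $[p^*]$.
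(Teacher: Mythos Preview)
Your proof is correct and follows essentially the same approach as the paper's own proof: both isolate a ``rich'' condition $p^*$ via~\eqref{eqn:equivalent_to_s_0}, fix a maximal antichain of size $\cc$ below it, and then run the Theorem~\ref{thm:thilo} construction relativized to $p^*$ while drawing the points from $X$. The only differences are cosmetic (notation and your explicit Cantor--Bendixson justification for the countable intersections).
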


\begin{proof}
In case $X \in s_0$,
we can choose $X'$ to be $X$, and we are finished.

So let us assume
that $X \notin s_0$; by~\eqref{eqn:equivalent_to_s_0},
we can fix a $p \in \SS$ satisfying
\begin{equation}\label{eqn:all_below_p_have_size_c_with_X}
\forall q \leq p \qua |[q] \cap X| = \cc.
\end{equation}

Fix
a maximal antichain
$\{q_\alpha : \alpha < \cc \}$
below $p$,
i.e.,
a set
of size $\cc$
such that
\begin{enumerate}[(a)]
 \item\label{it:below_p} $q_\alpha \leq p$ for each $\alpha < \cc$,

 \item\label{it:antichain_below_p}
 $|[q_\alpha] \cap [q_\beta]| \leq \aleph_0$
 for each $\alpha \neq \beta$,

 \item\label{it:maximal_below_p}
 for each $p' \leq p$ there is an $\alpha < \cc$ such that
 $p'$ is compatible with $q_\alpha$.

 \end{enumerate}

We are going to construct a set $X' \in s_0$ of size $\cc$ inside of~$[p] \cap X$
as follows.
By
induction on $\alpha < \cc$,
we
pick
\begin{equation}\label{eq:Cambridge_picking_x_alpha}
x_\alpha \in X \cap [q_\alpha] \setminus \bigcup_{\beta < \alpha} [q_\beta];
\end{equation}
to see that this is possible,
first note
that
$|[q_\alpha] \cap X| = \cc$
(by property~\ref{it:below_p} of the antichain and~\eqref{eqn:all_below_p_have_size_c_with_X});
since
for each $\beta < \alpha$,
we have
$|[q_\beta] \cap [q_\alpha]| \leq \aleph_0$
(by property~\ref{it:antichain_below_p}),
the set
$\bigcup_{\beta < \alpha} [q_\beta]$ cannot cover $X \cap [q_\alpha]$.
Finally,
let
$X' := \{ x_\alpha : \alpha < \cc \}$.
Note that $X' \sub [p]$ (by property~\ref{it:below_p}).

It remains to prove that $X'$ has the desired properties.
It is clear by construction that
$X' \sub X$ and $|X'| = \cc$.
To show that $X' \in s_0$, fix
$p' \in \SS$;
by~\eqref{eqn:equivalent_to_s_0}, it is enough to find
a $q \leq p'$
such that
$|[q] \cap X'| < \cc$.

In case $p'$ is incompatible with $p$ (i.e., $|[p'] \cap [p]| \leq \aleph_0$),
it follows
(since $X' \sub [p]$)
that
$|[p'] \cap X'| \leq \aleph_0 < \cc$,
finishing the proof.

Otherwise (i.e., in case $p'$ is compatible with $p$)
fix $p'' \leq p'$ with $p'' \leq p$.
By maximality below $p$
of our antichain
$\{q_\alpha : \alpha < \cc \}$
(see property~\ref{it:maximal_below_p}),
there is
an $\alpha < \cc$ such that
$q_\alpha$ is compatible with $p''$,
i.e., we can pick $q \leq p''$ with $q \leq q_\alpha$.
As in the proof of Lemma~\ref{lem:Raach}, we can finish the proof as follows:
for each $\gamma > \alpha$,
we have $x_\gamma \notin [q_\alpha] \supseteq [q]$
(see~\eqref{eq:Cambridge_picking_x_alpha});
therefore
$[q] \cap X' \sub \{ x_\gamma : \gamma \leq \alpha \}$, which is of size less than $\cc$.
\end{proof}

\subsection{Skew trees}\label{subsec:skew_trees}

The following notion was defined in~\cite{Blass_skew}:

\begin{defi}\label{defi:skew_in_Sacks}
A Sacks tree
$p \in \SS$
is
\emph{skew}
if for every $n \in \omega$, there is at most one splitting node of level $n$ in
$p$.
\end{defi}

It is easy to see that being skew is dense (see also~\cite[Lemma on page 273]{Blass_skew}) and translation-invariant:

\begin{lem}\label{lem:2_to_the_omega_skew_dense_translation_invariant}
The collection of
skew
trees is (open and) dense in $\SS$,
i.e.,
for every $p \in \SS$, there is a $q \leq p$ such that $q$ is skew (and, whenever $q \in \SS$ is skew, any $r \leq q$ is skew as well).
Moreover,
the
collection
of skew trees is
translation-invariant,
i.e.,
for every skew $q \in \SS$ and $t \in \twoom$,
also $q + t$ is skew.
\end{lem}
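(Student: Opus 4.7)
The lemma splits into openness, translation-invariance, and density; the first two are immediate observations, while density requires a (standard) fusion construction.

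For openness, I would argue that if $r \leq q$ and $\sigma$ is a splitting node of $r$, then $\sigma^\frown 0, \sigma^\frown 1 \in r \sub q$, so $\sigma$ is also a splitting node of $q$; hence the splitting nodes of $r$ at any level $n$ form a subset of those of $q$ at level $n$, and inherit the ``at most one'' bound. For translation invariance, the map $\varphi_t\colon\sigma \mapsto \sigma + t\rest|\sigma|$ is a level-preserving involution of $\twolom$ that commutes with the successor operation up to a relabeling of the last bit (namely $\varphi_t(\sigma^\frown i) = \varphi_t(\sigma)^\frown (i + t(|\sigma|))$); it therefore carries splitting nodes of $q$ bijectively onto splitting nodes of $q+t$ at the same level, so ``at most one per level'' transfers from $q$ to $q+t$.

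For density, given $p \in \SS$ my plan is to construct a skew $q \leq p$ by recursion, maintaining finite subtrees $T_0 \sub T_1 \sub \cdots$ of $p$ and strictly increasing split levels $\ell_0 < \ell_1 < \cdots$ such that $T_n$ has exactly one splitting node on each of the levels $\ell_0,\ldots,\ell_n$ (and no other splittings), and all maximal leaves of $T_n$ sit at a common level just above $\ell_n$. To pass from $T_n$ to $T_{n+1}$, pick a currently maximal leaf $\sigma$ of $T_n$ to split next, according to a fair breadth-first schedule; extend $\sigma$ within $p$ to a splitting node $\sigma^*$ of $p$ at some level $\ell_{n+1} > \ell_n$ (possible since $p$ is perfect); and extend every other maximal leaf of $T_n$ non-splittingly---by following a single branch of $p$---up to level $\ell_{n+1} + 1$. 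Close $\bigcup_n T_n$ under initial segments to obtain $q \sub p$; skewness is automatic, since by construction only one splitting node lives at each of the levels $\ell_n$ and no splitting occurs on any other level.

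The main obstacle is ensuring that $q$ is actually perfect, i.e., that every node of $q$ has a proper splitting extension inside $q$. This is precisely what the fair breadth-first schedule secures: every leaf ever created at some stage is eventually designated for splitting at a later stage, so each branch of $q$ passes through infinitely many splitting nodes. Perfectness of $p$ enters at each inductive step, both to find the splitting extension $\sigma^*$ of $\sigma$ and to extend the other leaves through single branches up to the prescribed level.
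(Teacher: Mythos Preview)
Your plan is correct in all three parts: the openness argument via inheritance of splitting nodes, the translation-invariance argument via the level-preserving bijection $\sigma\mapsto\sigma+t\rest|\sigma|$, and the density argument via the standard fusion with a fair schedule are each sound, and your identification of the fair schedule as the device guaranteeing perfectness of $q$ is exactly the point that needs care.

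There is nothing to compare against here: the paper does not actually prove this lemma but merely declares it easy and refers to \cite[Lemma on page 273]{Blass_skew} for density, so your proposal is more detailed than what the paper provides.
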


We are going to use the following essential property of skew trees:

\begin{lem}\label{lem:2_to_the_omega_skew_lemma}
Let $p \in \SS$ be skew, and let $t \in \twoom$ with $t \neq 0$ (i.e., $t$ has at least one bit with value $1$).
Then
$|[p] \cap ([p] + t) | \leq 2$
(in fact, the intersection has either 2 elements or is empty).
In particular, $p$ and $p + t$ are incompatible.
\end{lem}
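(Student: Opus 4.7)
The plan is to use skewness to pin down the structure of any $x \in [p]\cap([p]+t)$. Let $n_0 := \min\{n \in \omega : t(n) = 1\}$. For any $x$ in the intersection, the sequences $x$ and $x+t$ are two distinct branches of $p$ which agree on their first $n_0$ bits and disagree at bit $n_0$; hence $x\rest n_0$ is a splitting node of $p$ at level $n_0$. Since $p$ is skew, $p$ has a unique splitting node $s$ at level $n_0$, so every $x$ in the intersection must satisfy $x\rest n_0 = s$, with $\{x(n_0),(x+t)(n_0)\}=\{0,1\}$.

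Second, I would show that the intersection contains at most the two candidates $x$ and $x+t$. Suppose toward a contradiction that $y\in [p]\cap([p]+t)$ with $y\notin\{x,x+t\}$. Then $y\rest n_0 = s$ by the same reasoning. Depending on whether $y(n_0)=x(n_0)$ or $y(n_0)=(x+t)(n_0)$, the branch $y$ agrees with one of $x$ or $x+t$ on the first $n_0+1$ bits; call this branch $x^*$ and the other $x^{**}$. Since $y\neq x^*$, there is a first level $m>n_0$ where they differ, so $x^*\rest m$ is a splitting node of $p$ at level $m$. Because bitwise addition of $t$ preserves first-difference levels, $y+t$ and $x^{**}$ also first differ at the same level $m$, so $x^{**}\rest m$ is a splitting node of $p$ at level $m$ as well. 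But $x^*\rest m$ and $x^{**}\rest m$ already disagree at bit $n_0<m$, producing two distinct splitting nodes at level $m$ and contradicting skewness.

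Since $t\neq 0$ forces $x\neq x+t$, this proves $|[p]\cap([p]+t)|\leq 2$, with equality when the intersection is nonempty. For the ``in particular'' clause: if $p$ and $p+t$ were compatible there would be some $r\in\SS$ with $[r]\sub [p]\cap[p+t]=[p]\cap([p]+t)$, but $[r]$ is a perfect set and therefore uncountable, contradicting the bound of $2$.

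The main (essentially only) obstacle will be carefully setting up the case split on the value $y(n_0)$ so that the two forced splitting nodes $x^*\rest m$ and $x^{**}\rest m$ are genuinely distinct; once that bookkeeping is clean, the skewness hypothesis closes the argument immediately.
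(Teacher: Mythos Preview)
Your proof is correct and uses essentially the same idea as the paper's: both locate the minimal level $n_0$ with $t(n_0)=1$ and derive a contradiction by exhibiting two distinct splitting nodes of $p$ at a common level. Your organization differs slightly in that you first use skewness at level $n_0$ to pin down the two elements as $\{x,x+t\}$ and then handle a hypothetical third element, whereas the paper starts directly from three arbitrary elements $y_0,y_1,y_2$ and does a case split on the position of $n_0$ relative to their mutual splitting levels; your version avoids that case split and makes the parenthetical ``either $2$ elements or empty'' immediate.
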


\begin{proof}
Assume
that there are
$3$
distinct elements $y_0, y_1, y_2 \in [p] \cap ([p] + t)$, i.e.,
$\{y_0, y_1, y_2, y_0 + t, y_1 + t, y_2 + t \} \sub [p]$.
Due to the nature of
$\twoom$, the reals $y_0, y_1, y_2$ ``split'' at two different levels $n < m$; more precisely: let us assume w.l.o.g.\
that~$n$ is the smallest natural number with $y_0(n) \neq y_1(n)$
and~$m > n$ is the smallest natural number with $y_1(m) \neq y_2(m)$. Furthermore, let $k$ be the smallest natural number with $t(k) = 1$.
Clearly,
either $k < m$ or $k > n$ (or both), and
in either case
this gives rise to two splitting nodes in $p$
at the same level:
if~$k < m$, then
$\{ y_1, y_2 \}, \{ y_1 + t, y_2 + t \}$ give rise to two splitting nodes at
level~$m$; if~$k > n$, then
$\{ y_0, y_0 + t \}, \{ y_1, y_1 + t \}$
give rise to two splitting nodes at level~$k$.
\end{proof}

\subsection{The regular case}

\begin{lem}\label{lem:main_lemma_c_regular}
Let $X \sub \twoom$ with $|X| = \cc$.
Then there exists an $X' \sub X$ with $|X'| = \cc$
such that
$X'$ is \Hejnice{\cc}.
\end{lem}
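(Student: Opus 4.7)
The plan is to perform a transitive refinement of the Cambridge-style construction of Lemma~\ref{lem:Cambridge}, using an antichain of skew trees whose splitting-level sets form an almost disjoint family on $\omega$. This extra AD structure ensures that the translate of any antichain element by any $t \in \twoom$ is incompatible with the other antichain elements, which is the crucial new ingredient needed to handle translates.

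First, applying the opening case-split from the proof of Lemma~\ref{lem:Cambridge}, I would reduce to the case where there is a $p \in \SS$ with $|[q] \cap X| = \cc$ for every $q \leq p$ (the case $X \in s_0$ being a straightforward modification). Using a MAD family on $\omega$ of size $\cc$ (which exists in ZFC by extending any AD family of size $\cc$ to a maximal one) together with the density of skew trees, I would then construct below $p$ a maximal antichain $\{q_\alpha : \alpha < \cc\}$ of skew trees whose splitting-level sets $S_{q_\alpha} \sub \omega$ form an almost disjoint family. Since translation preserves the branching pattern, $S_{q_\alpha + t} = S_{q_\alpha}$; so for $\alpha \ne \beta$ and any $t \in \twoom$, a common perfect extension of $q_\alpha + t$ and $q_\beta$ would require infinitely many levels in $S_{q_\alpha} \cap S_{q_\beta}$. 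This intersection is finite, so the trees are incompatible and $|[q_\alpha + t] \cap [q_\beta]| \leq \aleph_0$. Combined with Lemma~\ref{lem:2_to_the_omega_skew_lemma} (which gives $\leq 2$ for the diagonal case $\alpha = \beta$, $t \ne 0$), this uniformly controls the translated intersections of antichain elements.

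Next, fixing a bijection $\cc \leftrightarrow \cc \times \twoom$ and enumerating the pairs $(\alpha_\xi, t_\xi)_{\xi < \cc}$, I would select, by transfinite recursion on $\gamma < \cc$,
\[
x_\gamma \in (X \cap [q_\gamma]) \setminus \Bigl( \bigcup_{\beta < \gamma} [q_\beta] \,\cup\, \bigcup_{\xi < \gamma,\; (\alpha_\xi, t_\xi) \ne (\gamma, 0)} [q_{\alpha_\xi} + t_\xi] \Bigr).
\]
The forbidden set meets $[q_\gamma]$ in at most $|\gamma| \cdot \aleph_0 < \cc$ points (each term contributes $\leq \aleph_0$ by the antichain property or by the AD-analysis above), while $|X \cap [q_\gamma]| = \cc$ by the assumption on $p$, so the choice is possible. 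Setting $X' := \{x_\gamma : \gamma < \cc\}$ yields $X' \sub X$ with $|X'| = \cc$.

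To verify that $X'$ is \Hejnice{\cc}, given $p' \in \SS$ use maximality of the antichain to pick $q \leq p'$ extending some $q_\alpha$; such $q$ is automatically skew. For every $t \in \twoom$ we have $([q] + t) \cap X' \sub [q_\alpha + t] \cap X'$, and the pair $(\alpha, t)$ is enumerated at some stage $\xi < \cc$, after which no $x_\gamma$ with $\gamma > \xi$ lies in $[q_\alpha + t]$. Hence $|[q_\alpha + t] \cap X'| \leq |\xi| + 1 < \cc$, as required. The main obstacle lies in the antichain construction: simultaneously realising maximality in $\SS$ below $p$, skewness, and almost-disjointness of splitting-level sets requires a delicate ZFC argument interlacing an enumeration of perfect subtrees of $p$ with an almost-disjoint extension of a family of infinite subsets of $\omega$, relying on density of skew trees (Lemma~\ref{lem:2_to_the_omega_skew_dense_translation_invariant}) and maximality of the AD family to ensure enough room at every limit stage.
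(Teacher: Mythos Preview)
There is a fatal obstruction to the antichain construction: a maximal antichain below~$p$ consisting of skew trees with pairwise almost-disjoint splitting-level sets \emph{does not exist} in general. Take the case $p = \twolom$ (which certainly arises, e.g., whenever every Sacks condition meets $X$ in size~$\cc$). Suppose $\{q_\alpha : \alpha < \cc\}$ were such an antichain, pick any $t \neq 0$, and set $r := q_0 + t$. Then $r \leq \twolom$, and $S_r = S_{q_0}$ since translation preserves splitting levels. By Lemma~\ref{lem:2_to_the_omega_skew_lemma}, $r$ is incompatible with~$q_0$. For $\alpha \neq 0$, any common extension $q \leq r, q_\alpha$ would satisfy $S_q \sub S_r \cap S_{q_\alpha} = S_{q_0} \cap S_{q_\alpha}$, which is finite by almost-disjointness, contradicting that $q$ is perfect. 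Hence $r$ is incompatible with every~$q_\alpha$, violating maximality. The very feature you rely on---translation-invariance of the splitting-level set---is precisely what destroys maximality. (Shrinking~$p$ does not obviously help; you give no argument that the antichain exists below \emph{any}~$p$.) A secondary issue: the case $X \in s_0$ is not a ``straightforward modification'', since then $X \cap [q_\gamma]$ may be empty and your recursion cannot even start.

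The paper's route is much simpler and avoids any such antichain. First apply Lemma~\ref{lem:Cambridge} to assume $X \in s_0$. Then split into two cases according to whether $|[p] \cap X| < \cc$ for every skew~$p$. If so, $X$ itself is \Hejnice{\cc}, since skew trees are dense and translation-invariant. If not, fix a \emph{single} skew~$p$ with $|[p] \cap X| = \cc$ and let $X' := [p] \cap X$. The translates $\{p + t : t \in \twoom\}$ form an antichain by skewness; it is not maximal, but this does not matter: given $q \in \SS$, either $q$ is incompatible with every $p + t$ (so $|([q]+t) \cap X'| \leq |([q]+t) \cap [p]| \leq \aleph_0$ for all~$t$), or $q$ is compatible with some $p + t_0$, in which case one refines below $p + t_0$ and uses $X' \in s_0$ to kill that single translate, while all other translates are automatically handled by skewness of~$p$.
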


\begin{proof}
By Lemma~\ref{lem:Cambridge} above, we can assume
w.l.o.g.\ that $X \in s_0$.
We are going to distinguish two cases.

\underline{1st Case}:
$X$ has small intersection with the body of any skew tree:
\begin{equation}\label{eq:1_st_case}
\forall p \in \SS \qua (p \textrm{ skew} \implies |[p] \cap X| < \cc).
\end{equation}
From this it is easy to conclude that
$X$ is \Hejnice{\cc}: given $q \in \SS$, we can find an $r \leq q$
which is skew
(since the skew trees are dense in $\SS$);
consequently, $r + t$ is skew for all $t \in \twoom$
(since being skew is translation-invariant);
therefore
(see~\eqref{eq:1_st_case})
we have $|([r] + t) \cap X| < \cc$
for all $t \in \twoom$,
finishing the proof
that $X$ is \Hejnice{\cc}.

\underline{2nd Case}:
Fix a skew tree $p \in \SS$ such that
$|[p] \cap X| = \cc$.
Define
\begin{displaymath}
X' := [p] \cap X.
\end{displaymath}
Then $X' \sub X$ and
$|X'| = \cc$, so
it only remains to show that
$X'$ is \Hejnice{\cc}
(actually, we are going to show that
$X'$ is even \leqHejnice{\aleph_0}).
First recall
(see Lemma~\ref{lem:2_to_the_omega_skew_lemma})
that
the fact that $p$ is skew implies the following:
\begin{equation}\label{eq:by_skewness}
\forall t \in \twoom \setminus \{ 0 \}:
\quad
p \textrm{ is incompatible with } p + t
\end{equation}
(in other words, $\{p + t : t \in \twoom \} \sub \SS$ is an antichain).
Fix
$q \in \SS$;
we will
find an $r \leq q$ such that
\begin{equation}\label{eq:aleph_0_Hejnice_wish}
\forall t \in \twoom \qua |([r] + t) \cap X'| \leq \aleph_0.
\end{equation}
If $q$ is incompatible with $p + t$ for every $t \in \twoom$, then it is easy to check that $r := q$ satisfies~\eqref{eq:aleph_0_Hejnice_wish}:
for any $t \in \twoom$, we have $|([r] + t) \cap [p]| \leq \aleph_0$, so $X' \sub [p]$ yields~\eqref{eq:aleph_0_Hejnice_wish},
and we are finished.
Otherwise, we can fix a $t_0 \in \twoom$ and
a $q' \leq q$ with
$q' \leq p + t_0$.
Since we have assumed that
$X \in s_0$, we also have
$X' \in s_0$
and hence $X' + t_0 \in s_0$
(since being in $s_0$ is translation-invariant), so we can fix an $r \leq q'$ such that $[r] \cap (X' + t_0) = \emptyset$;
again, it is easy to check that
$r$ satisfies~\eqref{eq:aleph_0_Hejnice_wish}:
if $t = t_0$, we have
$([r] + t) \cap X' = \emptyset$; if $t \neq t_0$, we have
$r + t \leq p + (t_0 + t)$ (with $t_0 + t \neq 0$), so
$r + t$
is
incompatible with $p$ by~\eqref{eq:by_skewness},
i.e.,
$|([r] + t) \cap [p]| \leq \aleph_0$,
hence~\eqref{eq:aleph_0_Hejnice_wish} again holds true by
$X' \sub [p]$,
and the proof is finished.
\end{proof}

Using
the above lemma,
it is easy to finish the proof of the theorem
for the case ``$\cc$~regular'';
the case ``$\cc$~singular''
makes use of another lemma
which
is given below
(Lemma~\ref{lem:main_lemma_c_singular}).

\begin{proof}[Proof of Theorem~\ref{thm:main_ZFC_result}]
Let $X \sub \twoom$ with
$|X| = \cc$.
We have to show that $X$ is not $s_0$-shiftable (i.e.,
there is a $Y \in s_0$ such that $X + Y = \twoom$).

In case $\cc$ is regular,
we apply Lemma~\ref{lem:main_lemma_c_regular}
to obtain a set $X' \sub X$ with $|X'| = \cc$ such that $X'$ is \Hejnice{\cc}.
This gives rise to a set $D$
asked for by Lemma~\ref{lem:Raach}:
we
can fix (see Definition~\ref{defi:Hejnice}) a family
$(q_p : p \in \SS)$ such that for each $p \in \SS$, we have $q_p \leq p$, and
\begin{equation}\label{eq:q_p_property}
\forall t \in \twoom \qua |([q_p] + t) \cap X'| < \cc;
\end{equation}
let $D := \{ q_p + t :\, p \in \SS,\; t \in \twoom \} \sub \SS$;
clearly,
$D$ is dense and translation-invariant
(i.e., $D$ satisfies properties~\ref{eq:density} and~\ref{eq:translation_invariance} in Lemma~\ref{lem:Raach});
moreover,
fewer than
$\cc$
elements from $D$ do not cover $X'$
(i.e., $D$ satisfies property~\ref{eq:no_cover_by_fewer_than_c} in Lemma~\ref{lem:Raach} with respect to~$X'$),
due to the fact that $\cc$ is regular, $X'$ is of size $\cc$, and~\eqref{eq:q_p_property}. Therefore we can apply Lemma~\ref{lem:Raach} to the set $X'$
to derive that $X'$
is not $s_0$-shiftable.
It follows that the same is true for the set $X$, finishing the proof.

In case $\cc$ is singular, we proceed
analogously,
but apply
Lemma~\ref{lem:main_lemma_c_singular}
(instead of
Lemma~\ref{lem:main_lemma_c_regular}) to obtain
a set $X' \sub X$ with $|X'| = \cc$, and a cardinal $\mu < \cc$ such that $X'$ is \leqHejnice{\mu}.
The
family
$(q_p : p \in \SS)$ now satisfies
\begin{equation}\label{eq:q_p_property_singular}
\forall t \in \twoom \qua |([q_p] + t) \cap X'| \leq \mu;
\end{equation}
the corresponding set $D$ again satisfies
properties~\ref{eq:density},
\ref{eq:translation_invariance},
and~\ref{eq:no_cover_by_fewer_than_c}
in Lemma~\ref{lem:Raach}, where~\ref{eq:no_cover_by_fewer_than_c} holds
(even though $\cc$ is singular)
because of the fact that $X'$ is of size $\cc$, and~\eqref{eq:q_p_property_singular}.
The rest of the proof is the same, and so it is finished in ZFC.
\end{proof}

\subsection{The singular case}

The following lemma is more complicated to prove than Lemma~\ref{lem:main_lemma_c_regular}.
Note that, even though its conclusion is stronger,
it is not a
strengthening of the lemma because
there is the additional assumption that $\cc$ is singular.

\begin{lem}\label{lem:main_lemma_c_singular}
Assume $\cc$ is singular. Let $X \sub \twoom$ with $|X| = \cc$.
Then there
exists an
$X' \sub X$ with $|X'| = \cc$ and $\mu < \cc$ such that $X'$ is \leqHejnice{\mu}.
\end{lem}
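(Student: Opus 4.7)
My plan is to parallel Lemma~\ref{lem:main_lemma_c_regular}, using the cofinality $\kappa := \cf(\cc) < \cc$ in place of regularity. By Lemma~\ref{lem:Cambridge} I may assume $X \in s_0$. If some skew tree $p$ satisfies $|[p] \cap X| = \cc$, the Case~2 argument of Lemma~\ref{lem:main_lemma_c_regular} applies unchanged (it never invokes regularity of $\cc$) and produces $X' := [p] \cap X$, which is \leqHejnice{\aleph_0}; this settles the lemma with $\mu := \aleph_0$.

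Now assume every skew tree $p$ satisfies $|[p] \cap X| < \cc$. I would fix a strictly increasing cofinal sequence $(\mu_\alpha)_{\alpha < \kappa}$ in $\cc$, and for each $\alpha$ set
\[ \tilde{\SS}_\alpha := \{q \in \SS : q \text{ skew and } \forall t \in \twoom,\ |[q+t] \cap X| \leq \mu_\alpha\}, \]
which is downward closed in $\leq$ and translation-invariant. In the easy sub-case, some $\tilde{\SS}_{\alpha^*}$ is dense in the partial order of skew trees; then $X' := X$ works, since for each $p$ the density of the skew trees and of $\tilde{\SS}_{\alpha^*}$ produces skew $q \leq p$ in $\tilde{\SS}_{\alpha^*}$, witnessing \leqHejnice{$\mu_{\alpha^*}$}, and I would take $\mu := \mu_{\alpha^*}$.

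In the remaining (hard) sub-case, no $\tilde{\SS}_\alpha$ is dense, so for each $\alpha$ there is a witnessing skew $p_\alpha$ such that every skew $q \leq p_\alpha$ has some $t$ with $|[q+t] \cap X| > \mu_\alpha$. Here I plan to thin $X$ by transfinite recursion along $\cc$: enumerating the skew trees as $(p_\xi : \xi < \cc)$, I simultaneously build $X' \subseteq X$ one element at a time together with skew witnesses $q_\xi \leq p_\xi$, maintaining the invariant that $|[q_\zeta + t] \cap X'| \leq \mu$ for every $\zeta \leq \xi$ and $t \in \twoom$, where $\mu < \cc$ is a target cardinal fixed at the outset.

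The principal obstacle will be the bookkeeping: at each stage $\xi$, I need to (a) pick $q_\xi \leq p_\xi$ skew whose translates are already thin with respect to the partially-built $X'_{<\xi}$, and (b) choose a new element of $X$ outside the ``saturated'' region (the union of those $[q_\zeta + t]$ with $\zeta \leq \xi$ whose intersection with $X'_{<\xi}$ already has size $\mu$). The near-disjointness of translates of a skew tree (Lemma~\ref{lem:2_to_the_omega_skew_lemma}) should bound, for each prior $q_\zeta$, the number of saturated translates by roughly $|X'_{<\xi}|/\mu$; summing over the $|\xi| < \cc$ prior indices and invoking the cardinal arithmetic $\kappa \cdot \mu < \cc$ (available from singularity of $\cc$) is expected to keep the forbidden region strictly below cardinality $\cc$, so that $X$ is not exhausted. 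Executing this counting rigorously, and in particular guaranteeing (a) at every stage, is the technical heart of the argument; it appears to need passing to subtrees of $p_\xi$ and to exploit that every subset of $X$ of size $< \cc$ lies in $s_0$ (Proposition~\ref{propos:small_sets_in_s_0}).
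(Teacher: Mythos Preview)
Your Case~A and sub-case~B1 are correct; in particular, reusing the second case of Lemma~\ref{lem:main_lemma_c_regular} for Case~A is legitimate since that argument never invokes regularity of~$\cc$. The genuine gap is in sub-case~B2, where your length-$\cc$ recursion is not shown to close. At stage~$\xi$ the forbidden region for the new point $x_\xi$ is a union of at most $|\xi|$ sets of the form $[q_\zeta + t] \cap X$, each of size $<\cc$ by the Case~B hypothesis; once $|\xi| \geq \cf(\cc)$, singularity of~$\cc$ permits these sizes to be cofinal in~$\cc$, so the forbidden region may well have size~$\cc$ and exhaust~$X$. Your heuristic ``roughly $|X'_{<\xi}|/\mu$'' for the number of saturated translates is not meaningful infinite-cardinal arithmetic (pair-counting via Lemma~\ref{lem:2_to_the_omega_skew_lemma} gives only $\leq |X'_{<\xi}|$), and the inequality $\kappa\cdot\mu < \cc$ does not help because the individual pieces $[q_\zeta+t]\cap X$ are bounded only by~$\cc$, not by~$\mu$. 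Nor can you uniformly arrange $|[q_\xi + t]\cap X|\leq\mu$ for all $t$ when choosing~$q_\xi$: sub-case~B2 says precisely that this fails below each witness tree~$p_\alpha$, so those $p_\xi$ will appear in your enumeration with no good $q_\xi$ available.

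The paper avoids a length-$\cc$ recursion entirely. Assuming $X$ itself fails the conclusion (your~B2), it constructs a \emph{small} family $\{p_\alpha : \alpha < \cf(\cc)\}$ of skew trees satisfying $|[p_\alpha]\cap X| > \lambda_\alpha$ for a cofinal sequence $(\lambda_\alpha)$, together with the strong ``translation-antichain'' property that $t+p_\alpha$ and $p_\beta$ are incompatible whenever $\alpha\neq\beta$ or $t\neq 0$. Then $X' := \bigcup_{\alpha<\cf(\cc)} ([p_\alpha]\cap X)$ has size~$\cc$, and because the index set has only $\cf(\cc)$ members, the verification that $X'$ is $\leq\!\cf(\cc)$-transitively Marczewski null becomes a short direct computation parallel to the Case~2 argument you already used: any translate of a witness $q$ meets each $[p_\alpha]$ in a countable set, so it meets $X'$ in at most $\cf(\cc)\cdot\aleph_0 = \cf(\cc)$ points.
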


Note that Lemma~\ref{lem:main_lemma_c_regular} implies that the conclusion of the above lemma also holds if $\cc$ is a successor cardinal.

\begin{ques}
Is this
also
true if $\cc$ is weakly inaccessible?
\end{ques}

\begin{proof}[Proof of Lemma~\ref{lem:main_lemma_c_singular}]
We assume the conclusion is false for
$X$
(in place of $X'$)
and produce $X'$ such that it holds for $X'$.
Let $\muu := \cf (\cc)$.
First we establish:

\begin{claim}
There are an increasing sequence $(\lambda_\alpha)_{\alpha < \muu}$
with $\lambda_\alpha < \cc$ and
$\bigcup_{\alpha < \muu} \lambda_\alpha = \cc$ and a
family $\{p_ \alpha : \alpha < \muu\} \sub \SS$ of
skew
Sacks trees
such that $| [p_\alpha] \cap X | > \lambda_\alpha$,
and
$t + p_\alpha$ and $p_\beta$ are incompatible
for all
$\alpha, \beta < \mu$ and $t \in \twoom$
provided
$\alpha \neq \beta$ or $t \neq 0$.
\end{claim}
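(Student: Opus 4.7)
The plan is a transfinite induction on $\alpha < \mu$. Fix a strictly increasing sequence $(\lambda_\alpha)_{\alpha < \mu}$ of cardinals cofinal in $\cc$, which exists since $\mu = \cf(\cc)$. By Lemma~\ref{lem:Cambridge} I may also assume $X \in s_0$ after passing to a size-$\cc$ subset. The hypothesis that the conclusion of Lemma~\ref{lem:main_lemma_c_singular} fails for $X$ unfolds to the following: for every $X'' \subseteq X$ with $|X''| = \cc$ and every $\nu < \cc$, there is $p \in \SS$ such that every $q \leq p$ admits a $t \in \twoom$ with $|([q]+t) \cap X''| > \nu$. I will feed this into the induction, applied to well-chosen subsets of $X$.

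At stage $\alpha$, assuming $\{p_\beta : \beta < \alpha\}$ have been built satisfying all stated requirements, I first locate a skew tree $r$ with $|[r] \cap X| > \lambda_\alpha$: apply the unfolded hypothesis to $X$ with $\nu = \lambda_\alpha$, obtaining $p \in \SS$; by Lemma~\ref{lem:2_to_the_omega_skew_dense_translation_invariant} pick a skew $q \leq p$; then some translate $q + t$ (still skew, by translation-invariance) has $|([q]+t) \cap X| > \lambda_\alpha$, and I set $r := q + t$.

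The main obstacle is to refine $r$ to $p_\alpha \leq r$ incompatible with every $p_\beta + s$ for $\beta < \alpha$ and $s \in \twoom$, while preserving $|[p_\alpha] \cap X| > \lambda_\alpha$. Naively there are $|\alpha| \cdot \cc = \cc$ many translates to avoid, and blind thinning is not obviously feasible. My strategy combines two ideas. First, for each fixed $\beta$, the family $\{p_\beta + s : s \in \twoom\}$ is an antichain in $\SS$ by Lemma~\ref{lem:2_to_the_omega_skew_lemma} (applied to the skew $p_\beta$), so the translates compatible with $r$ correspond to specific overlaps; one aims to show that only $< \cc$ many $s$ yield uncountable $[r] \cap [p_\beta + s]$, reducing the number of ``true'' obstructions per $\beta$. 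Second, to simplify the remaining obstructions, apply the unfolded hypothesis not to $X$ directly but to a thinned subset $X_\alpha \subseteq X$ chosen so that the $s = 0$ obstructions vanish automatically: a natural candidate is $X_\alpha := X \setminus \bigcup_{\beta<\alpha} [p_\beta]$, which has size $\cc$ provided a strengthened inductive hypothesis $|[p_\beta] \cap X| < \cc$ is maintained, achievable from $X \in s_0$ by further refining each $p_\beta$ within a maximal antichain of $s_0$-witnessing subtrees and using $|\alpha| < \mu = \cf(\cc)$.

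The hardest part I anticipate is the first point: rigorously bounding, for each $\beta < \alpha$, the number of translates $s \in \twoom$ for which $[r] \cap [p_\beta + s]$ is uncountable. This requires exploiting the combinatorial rigidity of skew Sacks trees (shared splitting levels forcing structural coincidences) beyond what is explicitly isolated in the preceding lemmas. Once such a bound is in hand, the fewer-than-$|\alpha|\cdot\cc < \cc$ remaining obstructions can be eliminated by a diagonal thinning of $r$ combined with one final application of the unfolded hypothesis, closing the induction.
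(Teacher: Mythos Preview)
Your proposal has a genuine gap at exactly the point you flag as hardest, and the paper's argument avoids that difficulty by a completely different mechanism.

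You fix $(\lambda_\alpha)_{\alpha<\mu}$ in advance and then, at stage $\alpha$, try to refine a skew $r$ so as to be incompatible with every translate of every earlier $p_\beta$ while keeping $|[p_\alpha]\cap X|>\lambda_\alpha$. The step you isolate---showing that for each $\beta<\alpha$ only $<\cc$ many $s$ give $|[r]\cap([p_\beta]+s)|>\aleph_0$---is not proved, and I do not see how to extract it from skewness alone. Even granting such a bound, your closing move is problematic: the ``unfolded hypothesis'' produces \emph{some} tree with the desired property, not a subtree of the given $r$, so it cannot be invoked to thin $r$ while preserving $|[r]\cap X|>\lambda_\alpha$; and a bare diagonal thinning of $r$ against $<\cc$ perfect sets has no reason to preserve that lower bound either.

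The paper sidesteps all of this with a dichotomy. \emph{Case~1}: there is a single skew $p$ such that for every $p'\leq p$ and every $\lambda<\cc$, some translate of $[p']$ meets $X$ in $>\lambda$ points. Then split $p$ into $\mu$ subtrees $p'_\alpha$ with pairwise disjoint branch sets, choose $t_\alpha$ with $|(t_\alpha+[p'_\alpha])\cap X|>\lambda_\alpha$, and set $p_\alpha:=t_\alpha+p'_\alpha$; all required incompatibilities follow at once from skewness of the single ambient $p$ (Lemma~\ref{lem:2_to_the_omega_skew_lemma}) together with disjointness of the pieces. \emph{Case~2}: every skew $p$ has some $q\leq p$ and some $\lambda<\cc$ with $|(t+[q])\cap X|\leq\lambda$ for all $t$. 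Here the thresholds $(\lambda_\alpha)$ are chosen \emph{adaptively together with} the $p_\alpha$: one arranges that below $p_\alpha$ every condition has a translate meeting $X$ in $>\lambda_\alpha$ points, while every translate of $[p_\alpha]$ itself meets $X$ in $\leq\lambda_\beta$ points for all $\beta>\alpha$. Incompatibility of $t+p_\alpha$ with $p_\beta$ ($\alpha<\beta$) is then a pure cardinality contradiction on any common extension, with no combinatorial analysis of overlapping translates required. The idea you are missing is precisely this interleaved choice of thresholds and trees in Case~2.
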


\begin{proof}
We distinguish two cases. First assume that there is a skew tree $p \in \SS$ such that for all $p' \leq p$ and all $\lambda < \cc$
there is $t \in \twoom$ such that $| (t + [p'] ) \cap X | > \lambda$. Then split $p$ into $\muu$
trees $p'_\alpha$ ($\alpha < \muu$) with disjoint sets of branches. Next find $t_\alpha$ ($\alpha < \muu$)
such that $| (t_\alpha + [p'_\alpha]) \cap X| > \lambda_\alpha$, where
$(\lambda_\alpha)_{\alpha < \muu}$
is an arbitrary
increasing sequence
of cardinals
with union $\cc$. Let $p_\alpha := t_\alpha + p'_\alpha$. Now fix $t\in\twoom$ and
$\alpha, \beta < \muu$.
If $t + t_\alpha \neq t_\beta$,
then (by skewness of $p$, see Lemma~\ref{lem:2_to_the_omega_skew_lemma})
$| (t+t_\alpha + [p]) \cap (t_\beta + [p])| \leq 2$
and thus $| (t + [p_\alpha]) \cap [p_\beta] | \leq 2$ as required. If $t + t_\alpha = t_\beta$,
then necessarily
$\alpha \neq \beta$ (since otherwise also $t = 0$),
hence
$| (t + [p_\alpha]) \cap [p_\beta] | = 0$ because $[p'_\alpha]$ and $[p'_\beta]$ are disjoint.
This completes the first case.

Thus we may assume that for all skew trees $p$ there are $q \leq p$ and $\lambda < \cc$ such that
$| (t + [q] ) \cap X | \leq \lambda$ for all $t \in \twoom$. On the other hand, since the conclusion of
Lemma~\ref{lem:main_lemma_c_singular}
fails for $X$, we also know that for all $\lambda < \cc$ we can find a
w.l.o.g.\
(since being skew is dense)
skew tree $p$ such that
for all $q \leq p$ there is $t \in\twoom$ with $| (t + [q] ) \cap X | > \lambda$. Taking these two
assumptions together,
it is straightforward to construct a
strictly increasing sequence
of cardinals
$(\lambda_\alpha)_{\alpha < \muu}$
with union $\cc$ and a sequence of skew trees
$(p_\alpha)_{\alpha < \muu}$
such that for all $q \leq p_\alpha$ there is $t \in \twoom$ such that $| (t + [q]) \cap X | > \lambda_\alpha$
and $| (t + [p_\alpha] ) \cap X | \leq \lambda_\beta$
for all $t \in \twoom$ and all $\beta > \alpha$.

To see that the $p_\alpha$ are as required, fix $t \in\twoom$ and $\alpha, \beta < \muu$;
we have to show that $t + p_\alpha$ is incompatible with $p_\beta$.
In case $\alpha = \beta$,
we have
$t \neq 0$, and the skewness of $p_\alpha$ yields the incompatibility.
So,
w.l.o.g.,
$\alpha < \beta$. Assume $t + p_\alpha$ and $p_\beta$ are compatible with
common extension $q$. Since $q \leq t + p_\alpha$, we know that $ | (t' + [q] ) \cap X | \leq \lambda_\beta$
for all $t' \in \twoom$. On the other hand, $q \leq p_\beta$ implies that there is $t' \in \twoom$
such that $| (t' + [q] ) \cap X | > \lambda_\beta$, a contradiction.
\end{proof}

We now complete the proof of
Lemma~\ref{lem:main_lemma_c_singular}
using the claim.
As in the regular case, we can again assume
w.l.o.g.\
(see Lemma~\ref{lem:Cambridge})
that $X \in s_0$.
Let $X ' := \bigcup_{\alpha < \muu} ([p_\alpha] \cap X)$ where the $p_\alpha$ are as in the claim.
Clearly, $X' \sub X$, and $|X'| = \cc$ by the claim.
So it just remains to prove that $X'$ is \leqHejnice{\mu}.

Fix
$p \in \SS$; we have to find $q \leq p$ such that
$|([q] + t) \cap X'| \leq \mu$ holds for each $t \in \twoom$.
If for all $\alpha < \muu$
and all $t \in \twoom$,
$t + p$ is incompatible with $p_\alpha$, then $q = p$ clearly satisfies the conclusion.
Hence assume that $t + p$ is compatible with $p_\alpha$ for some $\alpha < \muu$ and $t \in \twoom$.
Let $p' \leq p$ be such that $t + p' \leq p_\alpha$.
It follows from the
claim
that $t + p'$ is incompatible with
all $t' + p_\beta$
whenever
$\alpha \neq \beta$ or $t' \neq t$.
Now let $q \leq p'$ such that $(t + [q] ) \cap X' = \emptyset$. Such $q$ exists because $X$ (and hence $X'$) is in~$s_0$.
We shall see that $q$ is as required.

Let $t' \in \twoom$ be arbitrary. If $t' = t$, we are done.
If $t' \neq t$, then $t' + t \neq 0$ and therefore $t + q$ is incompatible with all
$t' + t + p_\beta$,
that is, $t' + q$ is incompatible with all
$p_\beta$,
and $| (t' + [q] ) \cap X' | \leq
| (t' + [q] ) \cap (\bigcup_{\beta < \muu}  [p_\beta] )| \leq \muu$, as required.
This completes the proof of the
lemma.
\end{proof}

\section{Skew perfect sets in arbitrary Polish groups}\label{sec:arbitrary_Polish_groups}

In this section, we are going to explain how to generalize Theorem~\ref{thm:main_ZFC_result} to arbitrary Polish groups.

Let $(G,+)$ be a Polish group.
While there might be no generalization of Lebesgue measure,
most other concepts mentioned in the introduction (see Section~\ref{sec:Introduction}) can be canonically
interpreted in $(G,+)$:
first of all,
for each $X, Y \sub G$ and $t \in G$, $X + Y$ and $X + t$ are defined, and so is $\I$-shiftability\footnote{Note that there are two versions of $\I$-shiftability in case of a non-abelian group,
such as two notions of $s_0$, of MBC, etc. However, the results presented here hold true for both versions, i.e., there is no $s_0$-shiftable set of size $\cc$ of either type; we do not claim though that the two notions necessarily coincide.}
for any $\I \sub \P(G)$; moreover,
since $G$ is a topological group, we have the notions of closed set, isolated point,
meagerness,
etc.; in particular,
a set $P \sub G$ is perfect if it is closed and has no isolated points, and a
set $Y \sub G$ is \emph{Marczewski null} (\emph{$Y \in s_0$})
if
for every perfect set~$P \sub G$ there is a perfect
set~$Q \sub P$
with $Q \cap Y = \emptyset$; consequently, it is natural to define
\mbox{``\emph{$s_0$-shiftable}''}
and
``\emph{Marczewski Borel Conjecture} (MBC)'' in any Polish (or even any topological) group.

Theorem~\ref{thm:main_ZFC_result}
turns out to hold true in any Polish group.
It is quite straightforward to check that the proof can be done
in a way
completely analogous
to the one for $(\twoom,+)$ given in Section~\ref{sec:ZFC_result},
using
analoguous versions of
Lemma~\ref{lem:Raach},
Definition~\ref{defi:Hejnice},
Lemma~\ref{lem:Cambridge},
as well as
Lemma~\ref{lem:main_lemma_c_regular} and
Lemma~\ref{lem:main_lemma_c_singular}.
The only essential modification
concerns
the material presented in
Subsection~\ref{subsec:skew_trees}
involving
the notion of skewness of a tree (whose combinatorial definition is
very
specific to $\twoom$);
so the scope of this section
is to provide a generalized
definition of skewness
(Definition~\ref{defi:goettingen_skew}, replacing Definition~\ref{defi:skew_in_Sacks})
and to prove that it enjoys the desired properties:
the property originally given by
Lemma~\ref{lem:2_to_the_omega_skew_lemma} is now
given by
Lemma~\ref{lem:skew_only_two}, whereas translation-invariance and density of skewness (see Lemma~\ref{lem:2_to_the_omega_skew_dense_translation_invariant})
is now
provided by Lemma~\ref{lem:skew_translation_invariance}
and, most importantly, Lemma~\ref{lem:skew_density}.

Note that
Definition~\ref{defi:goettingen_skew}
as well as
the two lemmas before the main Lemma~\ref{lem:skew_density}
work for
any group
$(G,+)$.

\begin{defi}\label{defi:goettingen_skew}
A set $Z \sub G$ is \emph{skew} if for all $x, y, v, w \in Z$ we have
\[
x \neq y \;\land\; v \neq w \;\land\;
\{x,y\} \neq \{v,w\}
\implies
x - y \neq v - w.
\]
\end{defi}

\begin{lem}\label{lem:skew_only_two}
Assume $Z \sub G$ is skew and $t \in G$ with $t \neq 0$. Then
$|Z \cap (Z + t)| \leq 2$.
\end{lem}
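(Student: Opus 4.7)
The plan is to prove the lemma by contradiction: assume $Z \cap (Z+t)$ contains three distinct elements $x_1, x_2, x_3$ and derive an impossible identification among them. For each $i$, I would write $x_i = y_i + t$ with $y_i \in Z$, observe that the $y_i$ are pairwise distinct (since translation by $t$ is injective on $G$) and that $x_i \neq y_i$ (since $t \neq 0$), so each $(x_i, y_i)$ is a genuine pair of distinct points in $Z$ with common difference $x_i - y_i = t$.

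The core of the argument is then just a double application of Definition~\ref{defi:goettingen_skew}. Applied to the pairs $(x_1, y_1)$ and $(x_2, y_2)$, both of which are proper and produce the same difference $t$, skewness forces the unordered pairs to coincide: $\{x_1, y_1\} = \{x_2, y_2\}$. Combined with $x_1 \neq x_2$, this pins down $x_1 = y_2$ and $y_1 = x_2$. Running exactly the same reasoning on the pairs $(x_1, y_1)$ and $(x_3, y_3)$ gives $y_1 = x_3$, so $x_2 = y_1 = x_3$, contradicting the distinctness of $x_2$ and $x_3$.

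Conceptually, the argument simply notes that skewness can be rephrased as saying that every nonzero element of $G$ is realized as a difference $a - b$ by at most one unordered pair $\{a,b\} \subseteq Z$, and three distinct points of $Z \cap (Z+t)$ would produce at least two such pairs realizing $t$. I do not anticipate a real obstacle; the only subtlety is the bookkeeping around the fact that skewness is stated in terms of unordered pairs, so one has to use the hypothesis $x_i \neq x_j$ to rule out the trivial identification within $\{x_i, y_i\} = \{x_j, y_j\}$ and retain only the crossed identification $x_i = y_j$, $y_i = x_j$. Once this is handled, the contradiction is immediate.
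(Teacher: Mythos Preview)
Your argument is correct. Both your proof and the paper's proceed by contradiction from three distinct elements in $Z \cap (Z+t)$ and a direct appeal to the definition of skewness, but the pairs fed into the definition differ. You exploit the ``vertical'' difference $t$: each $x_i$ and $y_i = x_i - t$ form a pair with common difference $t$, and two applications of (the contrapositive of) skewness force $\{x_1,y_1\} = \{x_2,y_2\} = \{x_3,y_3\}$, which collapses $x_2 = x_3$. The paper instead uses a ``horizontal'' difference: with $a,b,c \in Z \cap (Z+t)$ it compares the pairs $(a,b)$ and $(a-t,b-t)$, which share the difference $a-b$, and gets a contradiction from a \emph{single} application of skewness --- at the cost of a small preliminary case check (arranging $b - t \neq a$) to ensure the two unordered pairs really are distinct. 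Your version trades that case check for a second invocation of the definition; both are equally short and neither has a real advantage.
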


\begin{proof}
Assume towards a contradiction that $\{a, b, c\} \sub Z$ with $|\{a, b, c\}| = 3$ and
$\{a - t, b - t, c - t\} \sub Z$.

Since $a \neq c$, either
$b - t \neq a$
or
$b - t \neq c$;
say, w.l.o.g.,
$b - t \neq a$ holds.
Let
$x := a$, $y := b$, $v := a - t$, and $w := b - t$.
Then $x, y, v, w \in Z$, $x \neq y$, $v \neq w$, and $\{x,y\} \neq \{v,w\}$, but $x - y = v - w$, a contradiction (see Definition~\ref{defi:goettingen_skew}).
\end{proof}

\begin{lem}\label{lem:skew_translation_invariance}
Being skew is translation-invariant, i.e.,
whenever $Z \sub G$ is skew
and $t \in G$, then $Z + t$
is skew as well.
\end{lem}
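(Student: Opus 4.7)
The plan is to verify the defining condition of skewness for $Z + t$ directly by unwrapping the translation and applying cancellation, noting that the group operation is written additively but is not assumed abelian.

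Fix arbitrary $x', y', v', w' \in Z + t$ with $x' \neq y'$, $v' \neq w'$, and $\{x',y'\} \neq \{v',w'\}$; the goal is to show $x' - y' \neq v' - w'$. Write each element as $x' = x+t$, $y' = y+t$, $v' = v+t$, $w' = w+t$ with $x,y,v,w \in Z$.

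The first key observation is that right-translation by $t$ is a bijection of $G$, so the hypotheses transfer: $x' \neq y'$ gives $x \neq y$, $v' \neq w'$ gives $v \neq w$, and $\{x',y'\} \neq \{v',w'\}$ gives $\{x,y\} \neq \{v,w\}$. The second key observation is that the \emph{differences} $x'-y'$ and $v'-w'$ coincide with $x-y$ and $v-w$ respectively. Indeed, using $-(y+t) = -t - y$ (which holds in any group, abelian or not), we compute
\[
x' - y' \;=\; (x+t) + (-t - y) \;=\; x + (t - t) - y \;=\; x - y,
\]
and similarly $v' - w' = v - w$.

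Combining these two observations, the quadruple $(x,y,v,w) \in Z^4$ satisfies all three hypotheses of Definition~\ref{defi:goettingen_skew}, so by the assumed skewness of $Z$ we conclude $x - y \neq v - w$, i.e.\ $x' - y' \neq v' - w'$. Hence $Z + t$ is skew.

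The only subtlety anywhere in the argument is the handling of $-(y+t)$ in the possibly non-abelian setting; once one notices that right-translation commutes with subtraction in the required sense, the lemma is essentially a tautology, and no topology or density argument is needed. (An entirely analogous check would also establish left-translation invariance, should one prefer the other convention of $\I$-shiftability mentioned in the footnote of Section~\ref{sec:arbitrary_Polish_groups}.)
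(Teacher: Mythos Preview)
Your proof is correct; the paper itself omits the proof entirely, treating the lemma as self-evident, and your direct verification (carefully handling the inverse $-(y+t)=-t-y$ in the possibly non-abelian setting) is exactly the natural way to justify it.
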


Finally, being skew is ``dense'' within the collection of all perfect subsets of~$G$:

\begin{lem}\label{lem:skew_density}
Let $(G,+)$ be a Polish group, and let $P \sub G$ be a perfect set. Then there is a perfect set $Q \sub P$ such that $Q$ is skew.
\end{lem}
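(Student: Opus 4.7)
My plan is to deduce the lemma from a Kuratowski--Mycielski theorem: in any perfect Polish space $X$, given countably many meager sets $A_n \sub X^{k_n}$, there is a perfect $Q \sub X$ with $Q^{k_n} \cap A_n \sub \Delta_{k_n}$ for every $n$ (where $\Delta_{k_n}$ denotes the set of tuples in $X^{k_n}$ with a repeated coordinate). I would apply it inside the perfect Polish space $P$ with the two sets
\[
A_4 := \{(x,y,v,w) \in P^4 : x - y = v - w\} \quad \text{and} \quad A_3 := \{(a,b,c) \in P^3 : a - b = b - c\},
\]
which are closed in $P^4$ and $P^3$ respectively by continuity of the group operations.

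Verifying that $A_4$ is nowhere dense (hence meager): if some nonempty open box $U_1 \times U_2 \times U_3 \times U_4 \sub P^4$ were contained in $A_4$, then fixing $y_0 \in U_2$, $v_0 \in U_3$, $w_0 \in U_4$ forces $x = (v_0 - w_0) + y_0$ for every $x \in U_1$, making $U_1$ a singleton and contradicting the fact that $U_1$ is a nonempty relatively open subset of the perfect set $P$. The same argument applies to $A_3$, using that $a$ and $c$ determine $b$ uniquely via $a - b = b - c$. Only continuity of the group operations is invoked, so the argument works verbatim in the non-abelian setting as well.

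Given a perfect $Q \sub P$ as produced by the theorem, I would verify skewness by case analysis on the coincidence pattern among $x, y, v, w \in Q$ satisfying $x \neq y$, $v \neq w$, and $\{x, y\} \neq \{v, w\}$. If all four are distinct, $x - y \neq v - w$ follows directly from $Q^4 \cap A_4 \sub \Delta_4$. If exactly three are distinct, the subcases $x = v$ and $y = w$ each yield the other coincidence from $x - y = v - w$ and thus $\{x,y\} = \{v,w\}$, contradicting the hypothesis; the subcases $x = w$ and $y = v$ rewrite $x - y = v - w$ as the midpoint form $a - b = b - c$ on three distinct points of $Q$ (namely $(y, x, v)$ or $(x, y, w)$), ruled out by $Q^3 \cap A_3 \sub \Delta_3$. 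Only two distinct values among the four would give $\{x, y\} = \{v, w\}$, already excluded. The main obstacle I expect is this combinatorial bookkeeping; the analytic input is a one-line consequence of $P$ being perfect and the group operations being continuous.
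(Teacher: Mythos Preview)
Your approach via Kuratowski--Mycielski is correct and more streamlined than the paper's, with one small slip: you claim that ``$a$ and $c$ determine $b$ uniquely via $a - b = b - c$'', but this fails in many Polish groups (in $2^\omega$ the equation reduces to $a = c$ and places no constraint on $b$; in $S^1$ there are two solutions for $b$). The fix is immediate and in the spirit of your $A_4$ argument: fix $b_0 \in U_2$ and $c_0 \in U_3$ instead, so that $a = (b_0 - c_0) + b_0$ is uniquely determined and $U_1$ is forced to be a singleton. With this correction the proof goes through.

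The paper takes a more hands-on route: it introduces an auxiliary notion of a finite sequence of sets being ``skew-like'', proves two lemmas showing that skew-like configurations of points in $P$ can be thickened to skew-like configurations of open neighborhoods, and then builds a Cantor scheme level by level so that every finite stage is skew-like. Your argument packages all of this into a single invocation of Kuratowski--Mycielski, trading self-containedness for brevity; in effect the paper is reproving a special case of that theorem tailored to the situation at hand. The explicit case analysis on coincidence patterns among $x,y,v,w$ (and the need for the auxiliary relation $A_3$) is the small price you pay, whereas the paper's index-based ``skew-like'' framework absorbs those degenerate cases automatically because the indices $i,j,k,l$ are distinct nodes of the scheme from the outset. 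Both proofs ultimately rest on the same analytic observation: the relation $x - y = v - w$ is closed and nowhere dense because fixing three of the variables pins down the fourth.
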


\begin{proof}
We start with a definition, which
makes
sense in every group,
and
two lemmas,
which work in every topological group.

\begin{defi}\label{defi:skew_like}
Let
$(A_n : n \in m)$ be a sequence of length $m \in \omega$ with $A_n \sub G$
for each $n \in m$.

We
say that $(A_n : n \in m)$
is
\emph{skew-like}
if for each
$(i,j,k,l) \in m^4$
satisfying $i \neq j$, $k \neq l$, $\{i,j\} \neq \{k,l\}$
the following holds:
\begin{equation}\label{eq:skew_like}
(x,y,v,w) \in A_i \times A_j \times A_k \times A_l \implies x-y \neq v-w.
\end{equation}

For a fixed quadruple
$(i,j,k,l) \in m^4$
satisfying $i \neq j$, $k \neq l$, $\{i,j\} \neq \{k,l\}$,
we
say that $(A_n : n \in m)$ is
\emph{skew-like with respect to $(i,j,k,l)$}
if~\eqref{eq:skew_like}
holds
for this quadruple.
\end{defi}

Note that being skew-like is preserved when
sets are replaced by subsets:
whenever $(A_n : n \in m)$ is
skew-like (with respect to $(i,j,k,l)$),
then so is any $(A_n' : n \in m)$
satisfying $A_n' \sub A_n$ for each $n \in m$.

To simplify notation, we say that $(z_n : n \in m)$ (with $z_n \in G$) is skew-like
if $(\{z_n\} : n \in m)$ is skew-like according to the above definition (analogous for skew-like with respect to $(i,j,k,l)$).

\begin{lem}\label{lem:find_skew_like_neighborhoods}
Assume $(i,j,k,l) \in m^4$ is given, and
$(z_n : n \in m)$
is skew-like with respect to $(i,j,k,l)$.

Then there are open neighborhoods $(U_n : n \in m)$ of $(z_n : n \in m)$ such that $(U_n : n \in m)$ is skew-like with respect to $(i,j,k,l)$.
\end{lem}

\begin{proof}
First note that the assumption (i.e., $(z_n : n \in m)$
is skew-like with respect to $(i,j,k,l)$)
is exactly the statement $z_i - z_j \neq z_k - z_l$; in other words,
the quadruple $(z_i,z_j,z_k,z_l)$ belongs to the set
$H := \{(x,y,v,w) : x-y \neq v-w \} \sub G^4$.
The mapping
$\varphi: G^4 \rightarrow G$
with
$(x,y,v,w) \mapsto x-y+w-v$
is continuous, and $G \setminus \{0\}$ is open, hence $H = \varphi^{-1}(G \setminus \{0\})$ is open as well.

Consequently,
we can fix open neighborhoods $(I,J,K,L)$ of $(z_i,z_j,z_k,z_l)$
with
$I \times J \times K \times L \sub H$,
so it is clear that we can find
open
neighborhoods $(U_n : n \in m)$ of $(z_n : n \in m)$
such that
$U_i \times U_j \times U_k \times U_l \sub I \times J \times K \times L \sub H$.

By definition of $H$,
$(U_n : n \in m)$ is skew-like with respect to $(i,j,k,l)$.
\end{proof}

\begin{lem}\label{lem:main_skew_like}
Let $P \sub G$ be a set without isolated points.

Given $(z_n : n \in m)$ and $(U_n : n \in m)$
satisfying (for each $n \in m$)
\begin{enumerate}[(A)]
\item\label{eq:item_a} $z_n \in P$, and
\item\label{eq:item_b} the set $U_n$ is an open neighborhood of $z_n$,
\end{enumerate}
there are
$(z_n' : n \in m)$ and $(U_n' : n \in m)$ satisfying
(for each $n \in m$)
\begin{enumerate}[(a)]
\item\label{eq:item_A} $z_n' \in P$,
\item\label{eq:item_B} $U_n'$ is an open neighborhood of $z_n'$, and
\item\label{eq:item_C} $U_n' \sub {U_n}$,
\end{enumerate}
such that
$(U_n' : n \in m)$ is skew-like.\footnote{Formally, the $z_n$ and $z_n'$ are not needed in the
statement of this lemma, but they play an important role in the proof and in the Cantor scheme of 
the proof of Lemma~\ref{lem:skew_density}.}
\end{lem}

\begin{proof}
First
note
that it is enough
(recursively deal with the finitely many quadruples)
to prove the assertion of the lemma relativized to
``with respect to $(i,j,k,l)$'', i.e., it suffices to show that for each fixed quadruple
$(i,j,k,l) \in m^4$
satisfying $i \neq j$, $k \neq l$, $\{i,j\} \neq \{k,l\}$,
the following holds:
given
$(z_n : n \in m)$ and $(U_n : n \in m)$
satisfying~\ref{eq:item_a} and~\ref{eq:item_b},
there are
$(z_n' : n \in m)$ and $(U_n' : n \in m)$
satisfying~\ref{eq:item_A}, \ref{eq:item_B}, and~\ref{eq:item_C},
such that
$(U_n' : n \in m)$ is
skew-like
with respect to $(i,j,k,l)$.

So for the rest of the proof, let us fix an
$(i,j,k,l)$ satisfying
$i \neq j$, $k \neq l$, $\{i,j\} \neq \{k,l\}$.

Assume
$(z_n : n \in m)$ and $(U_n : n \in m)$
satisfying~\ref{eq:item_a} and~\ref{eq:item_b} are given.
We are going to prove that
there is $(z_n' : n \in m)$ with $z_n' \in P \cap U_n$ (for each $n \in m$)
such that
$(z_n' : n \in m)$ is skew-like with respect to $(i,j,k,l)$,
i.e.,
\begin{equation}\label{eq:singleton_skew_like}
z_i' - z_j' \neq z_k' - z_l'.
\end{equation}
By the lemma above,
this
is enough to finish the proof:
once we have
$(z_n' : n \in m)$,
Lemma~\ref{lem:find_skew_like_neighborhoods} yields $(U_n' : n \in m)$, skew-like with respect to $(i,j,k,l)$;
w.l.o.g., we can assume
$U_n' \sub U_n$ for each $n$, so
\ref{eq:item_A}, \ref{eq:item_B}, and~\ref{eq:item_C} are fulfilled, and being skew-like remains true.

Now observe the following: since
$(i,j,k,l)$ satisfies
$i \neq j$, $k \neq l$, $\{i,j\} \neq \{k,l\}$,
the set $\{i,j,k,l\}$
has size at least $3$, so
there is
an
element
(in fact, at least two)
in
$\{i,j,k,l\}$
which appears at exactly one position in the quadruple $(i,j,k,l)$.
From now on,
distinguish
this position; for the sake of notational simplicity only, assume that it is the first position, i.e., $i \notin \{j,k,l\}$.

We have to find $(z_n' : n \in m)$ with $z_n' \in P \cap U_n$ (for each $n \in m$)
such that~\eqref{eq:singleton_skew_like} holds true.
To that end, let $z_n' := z_n$ for each $n \in m \setminus \{i\}$.
To determine $z_i'$,
consider the equation $x - z_j' = z_k' - z_l'$. Since the free variable $x$ appears
only
once,
it is possible to solve the equation to uniquely determine $x \in G$. Since $P \sub G$
has no isolated point,
$P \cap U_i$ contains elements distinct from $x$. Let
$z_i'$
be any such element; then~\eqref{eq:singleton_skew_like} holds true, and the proof of the lemma is finished.
\end{proof}

We are now ready to finish the proof of Lemma~\ref{lem:skew_density}.
Since $G$ is a Polish group, we can
fix a
complete compatible metric $d$.
(From now on, all balls are
understood
with respect to $d$.)
Given a perfect set $P \sub G$,
we
are going to construct a Cantor scheme
(see, e.g., \cite[Definition~6.1 and Theorem 6.2]{Kechris})
in order to define a set~$Q$,
a homeomorphic copy of $\twoom$
within the perfect set~$P$,
which is in addition skew.

More precisely, we
define
$(z_s : s \in \twolom)$ and $(B_s : s \in \twolom)$
satisfying the following properties:
for
each $s \in \twolom$,
\begin{enumerate}
\item\label{first_of_five} $z_s \in P$,
\item $B_s$ is a closed ball with center $z_s$,
\item the diameter of $B_s$ is less than $2^{-|s|}$,
\item $B_{s^ \frown 0} \sub B_s$ and $B_{s^ \frown 1} \sub B_s$,
\item\label{last_of_five} $B_{s^ \frown 0} \cap B_{s^ \frown 1} = \emptyset$;
\end{enumerate}
moreover, for each $n \in \omega$,
\begin{enumerate}
\item[(6)] $(B_s : s \in 2^n)$ is skew-like.
\end{enumerate}

It is a standard
straightforward
construction to obtain $(z_s : s \in \twolom)$ and $(B_s : s \in \twolom)$ satisfying properties~(\ref{first_of_five})--(\ref{last_of_five}) above.
In order to ensure property~(6), apply Lemma~\ref{lem:main_skew_like} at each level of the construction: just note that the number of neighborhoods considered at level $n$ is finite (in fact, $2^n$), so it is possible to use the lemma to get $z_s \in P$ and balls $B_s$ such that property~(6) holds true as well.

Now we can define $Q$ as the set of limits of the $z_s$'s along branches of the binary tree:
for each $\rrr \in \twoom$, the sequence $(z_{\rrr \restrict n} : n \in \omega)$ is a Cauchy sequence (with respect to the metric $d$), so (since $G$ is complete with respect to $d$) the sequence $(z_{\rrr \restrict n} : n \in \omega)$ converges to a point $z_\rrr \in G$; finally, let $Q := \{ z_\rrr : \rrr \in \twoom \}$.

It is easy to check that $Q$ is perfect, and that $Q \sub P$ (due to the fact that each~$z_s$ is in $P$,
and $P$ is closed).

So it remains to show that $Q$ is skew.
Let $x,y,v,w \in Q$
satisfy
\begin{equation}\label{eq:skew_hypothesis}
x \neq y, \; v \neq w, \; \{ x,y \} \neq \{ v,w \};
\end{equation}
we will show that $x - y \neq v - w$.
Fix $\rrr_x, \rrr_y, \rrr_v, \rrr_w \in \twoom$
such that $z_{\rrr_x} = x$, $z_{\rrr_y} = y$, $z_{\rrr_v} = v$, $z_{\rrr_w} = w$.
By~\eqref{eq:skew_hypothesis}, we have
$\rrr_x \neq \rrr_y, \rrr_v \neq \rrr_w, \{ \rrr_x,\rrr_y \} \neq \{ \rrr_v,\rrr_w \}$.
Therefore we can fix an $n \in \omega$ large enough so that the latter is reflected down to level~$n$, i.e.,
\begin{equation}\label{eq:skew_like_hypothesis}
\rrr_x \restrict n \neq \rrr_y \restrict n, \; \rrr_v \restrict n \neq \rrr_w \restrict n, \;
\{ \rrr_x \restrict n, \rrr_y \restrict n \} \neq \{ \rrr_v \restrict n, \rrr_w \restrict n \}.
\end{equation}
By construction (see property~(6) above), $(B_s : s \in 2^n)$ is skew-like
(see Definition~\ref{defi:skew_like}), and
$(\rrr_x \restrict n, \rrr_y \restrict n, \rrr_v \restrict n, \rrr_w \restrict n) \in (2^n)^4$
satisfies~\eqref{eq:skew_like_hypothesis}.
Now observe that $x \in B_{{\rrr_x} \restrict n}$
(since $B_{{\rrr_x} \restrict n}$ is closed,
and for each $k \geq n$,
$z_{{\rrr_x} \restrict k} \in B_{{\rrr_x} \restrict k} \sub B_{{\rrr_x} \restrict n}$,
hence
$x = z_{\rrr_x} \in B_{{\rrr_x} \restrict n}$),
$y \in B_{{\rrr_y} \restrict n}$,
$v \in B_{{\rrr_v} \restrict n}$, and
$w \in B_{{\rrr_w} \restrict n}$,
so $x - y \neq v - w$, and the proof of the lemma is finished.
\end{proof}

\section{Marczewski Borel Conjecture in the Cohen model}\label{sec:MBC_Cohen}

This section is devoted to the proof of Theorem~\ref{thm:MBC_in_Cohen}:
in the Cohen model, MBC holds (i.e., there is no uncountable $s_0$-shiftable set in the Cohen model;
see Corollary~\ref{cor:MBC_in_the_Cohen_model}).

Assume $\bb = \aleph_1$ and let $\{ g_\alpha : \alpha < \omega_1 \}$ be a well-ordered unbounded family of functions, that is,
$\alpha < \beta $ implies $g_\alpha \leq^* g_\beta$ and for all $f \in \omom$ there is $\alpha < \omega_1$ with
$g_\alpha \not\leq^* f$. Given a Sacks tree $T \in \SS$, define the following two functions describing the ``speed" with
which splitting occurs in $T$:
\[ h_T (n) := \min \{ k : \mbox{ some node in } T \mbox{ at level } k \mbox{ has } n \mbox{ splitting predecessors} \} \]
and
\[ f_T (n) := \min \{ k : \mbox{ every node in } T \mbox{ at level } k \mbox{ has } 2n \mbox{ splitting predecessors} \}. \]
Clearly $h_T (n) \leq f_T (n)$ for all $n$,
and, for $x \in \twoom$, $h_T = h_{x+T}$ and $f_T = f_{x+T}$.

Assume $S,T \in \SS$. Say that $S$ is {\em somewhere dense in} $T$ if there is $s \in T$ with $T_s \leq S$
(where $T_s = \{ t \in T : t \subseteq s \lor s \subseteq t \}$). If there is
no such $s$, $S$ is {\em nowhere dense in} $T$. The latter is clearly equivalent to saying that $[S] \cap [T]$ is
nowhere dense in the relative topology of $[T]$, considered as a subspace of the Cantor space $\twoom$.

\begin{obs}  \label{splitting-obs}
Let $S,T \in \SS$, and assume that $h_S \not\leq^* f_T$.
Then $S$ is nowhere dense in $T$.
\end{obs}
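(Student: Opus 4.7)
My plan is to prove the contrapositive: assuming that $S$ is somewhere dense in $T$, say $s \in T$ with $T_s \leq S$, I shall derive $h_S \leq^* f_T$, contradicting the hypothesis.

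The key combinatorial observation driving the proof is that every node $u \in T$ with $s \subseteq u$ which is splitting in $T$ is automatically splitting in $S$: both immediate successors of $u$ belong to $T$ and extend $s$, hence both lie in $T_s \subseteq S$, and therefore $u$ splits in $S$ as well. To exploit this, I pick $n$ large enough that $f_T(n) > |s|$ (which holds for all but finitely many $n$, since $T$ is a Sacks tree and thus $f_T(n) \to \infty$); because $T$ has no terminal nodes above $s$, there is some $t \in T$ at level $f_T(n)$ with $s \subsetneq t$. By definition of $f_T$, this $t$ has at least $2n$ splitting predecessors in $T$. Every predecessor of $t$ is an initial segment of $t$, hence comparable with $s$, so at most $|s|$ of those $2n$ splitting predecessors can be strict initial segments of $s$; the remaining at least $2n - |s|$ extend $s$, and by the key observation they are splitting predecessors of $t$ in $S$ as well. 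For $n \geq |s|$ this yields at least $n$ splitting predecessors of $t$ in $S$, so $h_S(n) \leq f_T(n)$.

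Having established $h_S(n) \leq f_T(n)$ for all but finitely many $n$, we conclude $h_S \leq^* f_T$, the desired contradiction. The only point requiring care is the bookkeeping separating splitting nodes of $T$ from those of $S$; this is resolved cleanly by the key observation, which transfers splitting upward across the inclusion $T_s \subseteq S$ whenever we stay above $s$. No substantial obstacle is anticipated.
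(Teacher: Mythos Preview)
Your proof is correct. Both your argument and the paper's rest on the same combinatorial core --- comparing the number of splitting predecessors in $T$ versus in $S$ for a node at level $f_T(n)$, and using the factor of $2$ in the definition of $f_T$ to absorb the at most $|s|$ nodes below $s$ --- but the packaging differs. The paper argues directly: given $s \in T$, it picks $n$ with $h_S(n) > f_T(n)$, extends $s$ to $s'$ at level $f_T(n)$, and from the gap $\pred(s',T) - \pred(s',S) > n$ locates an explicit splitting node of $T$ above $s$ that fails to split in $S$, whose ``wrong'' child lies in $T \setminus S$. You instead prove the contrapositive: assuming $T_s \subseteq S$, you observe once and for all that every splitting node of $T$ above $s$ is a splitting node of $S$, and then a straight count gives $h_S(n) \leq f_T(n)$ for $n \geq |s|$. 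Your route avoids the case distinction ($s' \in S$ or not) and the hunt for a specific witness, making it a bit cleaner; the paper's route has the minor advantage of producing an explicit node in $T \setminus S$, though that is not needed here.
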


\begin{proof}
Let~$\predec{s}{T}$ denote the number of
splitting predecessors of~$s$ within~$T$.

Fix $s \in T$; we have to prove that $T_s \leq S$ fails, i.e., we have to find an extension of~$s$ which is in~$T$ but not in~$S$.
By assumption, we can fix $n \in \omega$ such that
$n \geq \predec{s}{T} - \predec{s}{S}$,
$n \geq |s|$, and
$h_S(n) > f_T(n)$.
By definition of~$f_T$, we have
$f_T(n) \geq 2n$, hence $f_T(n) \geq |s|$.
So we can fix $s' \in T$ with $s' \supseteq s$ and $|s'| = f_T(n)$;
in case $s' \notin S$, the proof is finished, so let us assume that $s' \in S$.
Observe that $\predec{s'}{T} \geq 2n$
(by definition of~$f_T$);
since $h_S(n) > f_T(n)$,
we have
$|s'| < h_S(n)$ and hence
$\predec{s'}{S} < n$
(by definition of~$h_S$).
Note that
$\predec{s'}{T} - \predec{s'}{S} > n \geq \predec{s}{T} - \predec{s}{S}$,
so
we can find $t \in T \cap S$
with
$s \subseteq t \subsetneq s'$
such that
$t$ is a splitting node of~$T$, but not a splitting node of~$S$;
consequently,
there is $i \in 2$ such that $s \subseteq t^\frown i \in T \setminus S$, as desired.
\end{proof}

Let $G \leq \twoom$ be a group. Say that $\bar\T = \la \T_\alpha : \alpha < \omega_1 \ra$ is a {\em $G$-matrix}
of Sacks trees if for each $\alpha < \omega_1$,
\begin{romanenumerate}
\item $\T : = \bigcup_{\alpha < \omega_1} \T_\alpha \sub \SS$ consists of skew Sacks trees,
\item $g_\alpha \leq^* h_T$ for all $T \in \T_\alpha$,
\item for all $S \neq T$ in $\T_\alpha$ and all $x\in G$, the trees $x + S$ and $T$ are incompatible
   (equivalently, $| (x + [S] ) \cap [T] | \leq \aleph_0$),
\item for all $T \in \SS$, the set $\{ (x,S) \in G \times \T :  x + S$ is somewhere dense in $T\}$ is at most countable.
\end{romanenumerate}
$\bar\T$ is a {\em dense $G$-matrix} if additionally
\begin{romanenumerate} \setcounter{enuroman}{4}
\item $\T$ is dense in $\SS$.
\end{romanenumerate}
The last property in the definition of $G$-matrix is actually redundant. We kept it in the list because it plays a crucial role
in the proof.

\begin{obs}
Property (iv) follows from properties (i) through (iii).
\end{obs}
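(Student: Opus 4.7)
The plan is to fix $T \in \SS$ and count the pairs $(x,S) \in G \times \T$ with $x+S$ somewhere dense in $T$, proceeding in two stages: first restrict which levels $\T_\alpha$ can contribute, then bound the contribution from each surviving level.

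For the first stage, I apply Observation~\ref{splitting-obs} contrapositively to $x+S$ and $T$: if $x+S$ is somewhere dense in $T$ then $h_{x+S} \leq^* f_T$, which (since $h_{x+S} = h_S$) reads $h_S \leq^* f_T$. Property (ii) then gives $g_\alpha \leq^* h_S \leq^* f_T$ whenever $S \in \T_\alpha$. Because $\{g_\alpha : \alpha < \omega_1\}$ is $\leq^*$-increasing and $\leq^*$-unbounded in $\omom$, the set $A := \{\alpha < \omega_1 : g_\alpha \leq^* f_T\}$ is a countable initial segment of $\omega_1$; hence only $S$'s from $\bigcup_{\alpha \in A} \T_\alpha$ can occur in relevant pairs.

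For the second stage, I fix $\alpha \in A$ and show that each node $s \in T$ admits at most one pair $(x,S) \in G \times \T_\alpha$ with $T_s \leq x+S$. Suppose $T_s \leq x_1 + S_1$ and $T_s \leq x_2 + S_2$ with $S_1, S_2 \in \T_\alpha$. Passing to bodies and using $-x = x$ in $\twoom$ yields $x_i + [T_s] \sub [S_i]$ for $i=1,2$; translating the first inclusion by $x_1 + x_2$ gives
\[
(x_1 + x_2) + [S_1] \;\supseteq\; x_2 + [T_s] \;\sub\; [S_2],
\]
so the bodies of $(x_1 + x_2) + S_1$ and $S_2$ share the perfect set $x_2 + [T_s]$, whence (via Cantor--Bendixson) these two trees are compatible. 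If $S_1 \neq S_2$ this contradicts (iii) applied with $x := x_1 + x_2 \in G$; if $S_1 = S_2 = S$, then $S$ is skew by (i), so Lemma~\ref{lem:2_to_the_omega_skew_lemma} forces $x_1 + x_2 = 0$, i.e., $x_1 = x_2$. Either way $(x_1,S_1) = (x_2,S_2)$.

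Thus for each $\alpha \in A$ the relevant pairs in $G \times \T_\alpha$ inject into the countable set $T$ by choosing a witness $s$, so they form a countable collection; summing over the countable $A$ proves (iv). The conceptual heart of the argument is pinpointing the countable family of admissible $\alpha$'s through the $\leq^*$-well-ordering of $(g_\alpha)$; after that (iii) and the skewness lemma cooperate cleanly, and the only care needed is the translation bookkeeping in the displayed inclusion.
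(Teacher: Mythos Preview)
Your proof is correct and follows essentially the same route as the paper's: first use Observation~\ref{splitting-obs} together with (ii) and the unboundedness of $(g_\alpha)$ to restrict attention to countably many levels $\T_\alpha$, then for each such level show via (iii) and the skewness lemma that the map sending a relevant pair $(x,S)$ to a witnessing node $s \in T$ is injective. The only cosmetic difference is that you package the first stage as a set $A$ while the paper fixes a single bound $\alpha$ with $g_\alpha \not\leq^* f_T$.
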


\begin{proof} Fix $T \in \SS$. There is $\alpha < \omega_1$ such that $g_\alpha \not\leq^* f_T$. By the previous observation and (ii),
we know that for all $\beta \geq \alpha$, all $S \in \T_\beta$ and all $x \in G$, $x + S$ is nowhere dense in $T$.
So let $\beta < \alpha$ and fix $s \in \twolom$. If $T_s \leq x + S$ and $T_s \leq x' + S'$ for $x,x' \in G$ and
$S , S' \in \T_\beta$, then $S$ and $x+x' + S'$ are compatible and $S = S'$ follows by (iii). Skewness (i), then,
implies that $x=x'$
(see Lemma~\ref{lem:2_to_the_omega_skew_lemma}). This establishes~(iv).
\end{proof}

\begin{lem}   \label{firstextension}
{\rm (First extension lemma)} Assume $G \in V$ and $\bar \T \in V$ is a $G$-matrix. Let $S \in \SS \cap V$. Then,
in the Cohen extension $V[c]$, there are $T \leq S$ and $\alpha < \omega_1$ such that $\bar \T'$ given
by $\T_\alpha'= \T_\alpha \cup \{ T \}$ and $\T_\beta ' = \T_\beta$ for $\beta \neq \alpha$ is still a $G$-matrix.
\end{lem}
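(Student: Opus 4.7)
I use the Cohen real $c$ to construct, in $V[c]$, a Sacks tree $T \leq S$ whose body is disjoint (in $V[c]$) from $[x+R]$ for every $R \in \T_\alpha \cap V$ and every $x \in G \cap V$, for a suitably chosen $\alpha < \omega_1$. Such disjointness immediately yields incompatibility and so verifies property~(iii) of the $G$-matrix for $\bar\T'$; (i) and (ii) will be arranged by standard density arguments, and (iv) follows from (i)--(iii) as already observed after the definition of $G$-matrix.

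\emph{Choice of $\alpha$.} By property~(iv) applied to $S$, the set $E = \{(x,R) \in G \times \T : x+R \text{ is somewhere dense in } S\}$ is countable. Each $R$ appearing in $E$ lies in some $\T_\beta$ with $\beta < \omega_1$; there are countably many such indices, so I pick $\alpha < \omega_1$ strictly above all of them. Then no $R \in \T_\alpha$ appears in any pair of $E$; equivalently, for every $R \in \T_\alpha$ and every $x \in G$, the tree $x+R$ is nowhere dense in $S$.

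\emph{The Cohen forcing and the key density.} In $V$, let $\PP$ be the countable atomless forcing whose conditions are finite skew subtrees $\tau \subseteq S$ with all leaves on a common level $h(\tau)$, and whose extensions are end-extensions (adding nodes only at strictly higher levels) that preserve skewness. The Cohen generic yields $T \leq S$ in $V[c]$. Routine density arguments ensure $T$ is perfect and skew and that $g_\alpha \leq^* h_T$ (simply delay the next splitting past $g_\alpha$). The critical density is: for each $(x,R) \in (V \cap G) \times (V \cap \T_\alpha)$, the set
\[
D_{x,R} := \{\tau \in \PP : \text{every leaf of } \tau \text{ lies outside } x+R\}
\]
is dense in $\PP$. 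To see density, fix $\tau \in \PP$ and a leaf $s$ of $\tau$; because $x+R$ is nowhere dense in $S$, $S_s \not\subseteq x+R$, so there exists $s' \supseteq s$ in $S$ with $s' \notin x+R$. Extending every leaf of $\tau$ in this fashion up to a common level produces $\tau' \leq \tau$ in $D_{x,R}$.

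\emph{Conclusion and main obstacle.} The generic meets every ground-model dense set, so for each such $(x,R)$ there is a stage at which every leaf of the current condition lies outside $x+R$. End-extension then forces every later node of $T$ to remain outside $x+R$ (since $x+R$ is a tree), whence $[T] \cap [x+R] = \emptyset$ in $V[c]$. Commutativity of $(\twoom,+)$ gives the symmetric statement $[x+T] \cap [R] = \emptyset$, so both directions of incompatibility required in (iii) hold for $\bar\T'$. The main obstacle is the density of $D_{x,R}$, which would fail if a leaf $s$ of $\tau$ happened to satisfy $S_s \subseteq x+R$ (a ``bad position''); the choice of $\alpha$ above the indices in $E$ rules this out, since no $R \in \T_\alpha$ has any translate somewhere dense in $S$.
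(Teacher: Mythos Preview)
Your proof is correct and follows essentially the same route as the paper: build $T\leq S$ as the generic for the Cohen-equivalent forcing of finite subtrees of $S$ ordered by end-extension, choose $\alpha$ so that every $G$-translate of every $R\in\T_\alpha$ is nowhere dense in $S$, and use genericity to make $[T]$ disjoint from all such translates. The only cosmetic difference is that the paper selects $\alpha$ directly via the unbounded family (picking $\alpha$ with $g_\alpha\not\leq^* f_S$ and invoking Observation~\ref{splitting-obs}), whereas you go through property~(iv); note that your phrasing ``pick $\alpha$ above all the indices'' tacitly assumes each $R$ lies in a bounded set of levels $\T_\beta$, which does follow from~(ii) and the unboundedness of the $g_\beta$ but deserves a word.
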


\begin{proof}
Again choose $\alpha < \omega_1$ such that $g_\alpha \not\leq^* f_S$. By Observation~\ref{splitting-obs} and (ii)
we know that for all $\beta \geq \alpha$, all $U \in \T_\beta$ and all $x \in G$, $x + U$ is nowhere dense in $S$.
In $V[c]$, let $T \leq S$ be a perfect tree of Cohen reals (in the topology of $[S]$). That is, $T$ is obtained by forcing
with finite subtrees of $S$ ordered by end-extension; this is equivalent to Cohen forcing. By pruning $T$ further,
if necessary, we may assume that $T$ is skew and that $g_\alpha \leq^* h_T$. By Cohen-genericity we have that for all $x \in G$
and all $U \in \T$ such that $x+U$ is nowhere dense in $S$, $(x + [U]) \cap [T] = \emptyset$.
In particular, $x + U$ and $T$ are incompatible. Thus (i) through (iii) for $\bar\T'$ are immediate.
\end{proof}

\begin{lem}  \label{secondextension1}
{\rm (Second extension lemma, successor step)} Assume $X \sub \twoom$, $X \in V$, has size at least $\aleph_1$, and let $G = \la X \ra$
be the group generated by $X$. Also assume $\bar \T \in V$ is a $G$-matrix. Let $\dot z$ be a $\CC$-name
for a new real (that is, the trivial condition forces $\dot z \notin V$). Then there is $x = x_{\dot z} \in X$ such that
the trivial condition forces $\dot z \notin x + [T]$ for all $T \in\T$.
\end{lem}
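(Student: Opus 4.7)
The plan is to argue by contradiction. Suppose the conclusion fails; then for every $x \in X$ there are $T_x \in \T$ and a Cohen condition $p_x \in \CC$ with $p_x \Vdash \dot z \in x + [T_x]$. Since $\CC$ is countable while $|X| \geq \aleph_1$, pigeonhole supplies an uncountable $X' \sub X$ and a single $p^* \in \CC$ with $p_x = p^*$ for all $x \in X'$. It then suffices to derive a contradiction from the fact that this fixed condition $p^*$ simultaneously forces $\dot z \in x + [T_x]$ for uncountably many $x$.

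The central construction is the \emph{value tree} of $\dot z$ below $p^*$, defined in $V$ by
\[
S^* := \{ s \in \twolom : \exists q \leq p^*,\; q \Vdash s \sub \dot z \}.
\]
This is a subtree of $\twolom$, and standard density arguments give $p^* \Vdash \dot z \in [S^*]$. Moreover, for each $x \in X'$ and every $s \in S^*$, a witness $q \leq p^*$ with $q \Vdash s \sub \dot z$ also satisfies $q \Vdash \dot z \in x + [T_x]$ (since $q \leq p^* = p_x$), which forces $s$ to be a node of the tree $x + T_x$. Hence $S^* \sub x + T_x$ as subtrees of $\twolom$, for every $x \in X'$.

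Next, $[S^*]$ is uncountable: otherwise it would consist of countably many ground-model reals, and density below $p^*$ would allow us to extend $p^*$ to decide $\dot z$ to a specific branch, contradicting $p^* \Vdash \dot z \notin V$. Being closed and uncountable, $[S^*]$ contains a perfect subset by Cantor-Bendixson, hence a perfect tree $S \in \SS$ with $S \sub S^*$. Then for every $x \in X'$, $S \sub S^* \sub x + T_x$, so taking the node $s = \emptyset \in S$ in the definition of ``somewhere dense'' shows that $x + T_x$ is somewhere dense in $S$.

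Finally invoke property~(iv) of the $G$-matrix $\bar\T$ applied to $S \in \SS$: the set $\{(y, U) \in G \times \T : y + U$ is somewhere dense in $S\}$ is at most countable. But by the previous paragraph, all uncountably many distinct pairs $\{(x, T_x) : x \in X'\}$ belong to this set, a contradiction. The only delicate step is the construction of the value tree $S^*$ and the observation that it must be contained in every $x + T_x$ and carry a perfect subtree; once this is in hand, property~(iv) closes the argument immediately.
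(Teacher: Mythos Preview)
Your proof is correct and takes essentially the same approach as the paper: both rest on the tree of possibilities $S^* = T_{\dot z,p^*}$ for $\dot z$ below a Cohen condition, together with property~(iv) of the $G$-matrix. The paper argues directly rather than by contradiction (for each of the countably many $p\in\CC$, property~(iv) gives only countably many bad pairs, so one picks $x\in X$ outside the countable union), and it notes that $S^*$ is itself already a perfect tree---since $\dot z$ is forced to be new, every node of $S^*$ has a splitting extension---so your Cantor--Bendixson detour is unnecessary.
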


\begin{proof}
For all conditions $p \in \CC$, let $T_{\dot z, p}$ be the tree of possibilities for $\dot z$ below $p$;
that is,
$s \in T_{\dot z, p}$ if there is $q \leq p$ such that $q \forces s \sub \dot z$. Since $\dot z$ is a name for
a new real, $T_{\dot z, p}$ is a perfect tree for every $p \in \CC$.

For each $p\in \CC$, by (iv), $ \{ (x,S) \in G \times \T : x + S$ is somewhere dense in $T_{\dot z, p} \}$ is at most
countable. Hence we may choose $x \in X$ such that $x + S$ is nowhere dense in $T_{\dot z,p}$ for all $S \in \T$ and
all $p \in \CC$. This implies that $\dot z \notin x + [S]$ for all $S \in\T$ is forced by the empty condition.
(To see this, fix $p \in \CC$. Since $x+S$ is nowhere dense in $T_{\dot z,p}$, there is $s \in T_{\dot z,p}$ with
$s \notin x + S$. So we can find $q \leq p$ such that $q \forces s \sub \dot z$, i.e., $q \forces \dot z \notin x + [S]$.)
\end{proof}

\begin{lem}  \label{secondextension2}
{\rm (Second extension lemma, limit step)} Assume $X \sub \twoom$, $X \in V$, has size at least $\aleph_1$, and let $G = \la X \ra$.
Put $V_0 =V$, let $V_n$, $n \in \omega$, be an increasing chain of intermediate ccc extensions, and let $V_\omega$
be the extension obtained by the direct limit. Assume
$\la \bar \T_n : n \in \omega \ra$ is
a chain
of $G$-matrices such that $\bar \T_n \in V_n$ for each $n \in \omega$ and $\la \T_n : n \in \omega \ra$ is increasing.
Let $z \in V_\omega$ be a new real (i.e., $z \notin V_n$ for any $n$). Then there is $x = x_{z} \in X$ such that
$z \notin x + [T]$ for all $T \in\bigcup_n\T_n$.
\end{lem}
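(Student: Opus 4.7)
The plan is to reduce this limit step to $\omega$-many applications of the successor step (Lemma \ref{secondextension1}): for each $n$ I will define a countable ``bad'' set $B_n \subseteq G$, and then exploit $|X| \geq \aleph_1$ to find a single $x \in X$ lying outside $\bigcup_n B_n$, which will automatically satisfy the desired conclusion.

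To construct $B_n$, I would work inside $V_n$. Let $\QQ_n$ be the quotient ccc forcing such that $V_\omega$ arises as a $\QQ_n$-generic extension of $V_n$, and let $\dot z \in V_n$ be a $\QQ_n$-name for $z$. Since $z \notin V_n$, $\dot z$ is forced to be a name for a new real, so for each $p \in \QQ_n$ the tree of possibilities $T_{\dot z, p}$ is a perfect Sacks tree in $V_n$. Property (iv) of the $G$-matrix $\bar\T_n$, applied in $V_n$ with $T = T_{\dot z, p}$, yields that the set $\{(x,S) \in G \times \T_n : x + S \text{ is somewhere dense in } T_{\dot z, p}\}$ is countable. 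Projecting to $G$ and taking the union over $p$ ranging in a countable dense subset of $\QQ_n$ (which exists in the intended Cohen-model application, where each $\QQ_n$ is Cohen forcing) produces the required countable set $B_n$.

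Setting $B := \bigcup_n B_n$ gives a countable union of countable sets, so $B$ is countable; since $|X| \geq \aleph_1$, there exists $x \in X \setminus B$. To verify that such $x$ works, fix $n$ and $S \in \T_n$. Because $x \notin B_n$, the tree $x + S$ is nowhere dense in $T_{\dot z, p}$ for every $p$ in the countable dense set of conditions of $\QQ_n$. Given any $p \in \QQ_n$, pass to a stronger condition in the countable dense set and, exactly as in the proof of Lemma \ref{secondextension1}, find $q \leq p$ with some $s \in T_{\dot z, q}$ satisfying $s \notin x + S$; this $q$ forces $\dot z \notin x + [S]$. By density, the trivial condition forces $\dot z \notin x + [S]$, and hence $z \notin x + [S]$. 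The main technical point will be that each quotient $\QQ_n$ admits a countable dense set of conditions (so that the ``countable union over $p$'' really stays countable), which holds automatically in the Cohen-iteration setting where the lemma is to be applied; beyond this, the argument is essentially a routine $\omega$-fold bookkeeping over the successor case.
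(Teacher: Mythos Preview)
Your reduction to countable bad sets $B_n$ is the right shape, but the technical assumption you lean on --- that each quotient $\QQ_n$ admits a countable dense subset --- is not available, even in the intended Cohen application. In Corollary~\ref{cor:MBC_in_the_Cohen_model} the lemma is invoked at a limit ordinal $\gamma<\kappa$ of countable cofinality, with the $V_n$'s taken to be $V_{\delta_n}$ for a cofinal sequence $\delta_n \nearrow \gamma$; the quotient $\QQ_n$ is then Cohen forcing on the index set $[\delta_n,\gamma)$, which may well be uncountable (e.g.\ $\gamma=\omega_1\cdot\omega$, $\delta_n=\omega_1\cdot n$). So your union over $p$ does not stay countable, and $B_n$ need not be countable. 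The lemma as stated only assumes ccc, and ccc by itself certainly does not give a countable dense set.

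The paper avoids this by organizing the argument differently. In each $V_n$ it considers only the \emph{single} tree of possibilities $T_{\dot z,n}$ below the trivial condition of the quotient, applies (iv) once to produce a countable bad set $X_n\sub G$ in $V_n$, and then uses the ccc of the forcing $\PP_n$ (from $V$ to $V_n$) to reflect this to a countable superset $Y_n=\{x\in G:\exists p\in\PP_n\;\, p\Vdash x\in\dot X_n\}$ in the ground model; the element $x\in X\setminus\bigcup_n Y_n$ is then chosen in $V$. The verification exploits the direct-limit hypothesis rather than any density assumption on $\QQ_n$: given a condition $p$ in the quotient $\PP_{[n,\omega)}$, there is $m\geq n$ with $p\in\PP_{[n,m)}$; stepping into $V_m$ with $p$ in the generic, one has $x\notin X_m$ and $S\in\T_m$, so $x+S$ is nowhere dense in $T_{\dot z,m}$, and the desired stronger condition $q$ is found in $\PP_{[m,\omega)}$. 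In short, instead of trying to handle all conditions at a fixed level $n$ (which forces you to control uncountably many trees of possibilities), the paper absorbs each condition into the generic at some later level $m$ and works there below the trivial condition --- this absorption trick is the idea your proposal is missing.
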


\begin{proof}
In $V_n$, let $T_{\dot z,n}$ be the tree of possibilities for $\dot z$: $s \in T_{\dot z,n}$ if there is $p$ in the quotient forcing
$\PP_{[n,\omega)}$ leading from $V_n$ to $V_\omega$ such that $p \forces s \sub \dot z$. Again $T_{\dot z,n}$ must
be a perfect tree.

By (iv), $\{ (x,S) \in G \times \T_n : x+S$ is somewhere dense in $T_{\dot z,n} \}$ is at most
countable. That is, $X_n := \{ x \in G : $ there is $S\in \T_n$ such that $x + S$ is somewhere dense
in $T_{\dot z,n}\}$ is at most countable.

In the ground model, $Y_n : = \{ x \in G : $ some $p \in \PP_n$ forces $x \in \dot X_n \}$ is at most countable
by the ccc of the forcing $\PP_n$ leading to the generic extension $V_n$. So we can find $x \in X$ with $x\notin \bigcup_n Y_n$.
We claim that $z \notin x + [S]$ for all $S \in \bigcup_n \T_n$.

To see this, fix $n$ and $S \in \T_n$ in $V_n$. Also let
$p \in \PP_{[n,\omega)}$. Then $p \in \PP_{[n,m)}$ for some $m \geq n$. Step into $V_m$ with $p$
belonging to the generic. Since $x\notin X_m$ and $S \in \T_m$,
$x + S$ is nowhere dense in $T_{\dot z,m}$. Find a condition $q$ in $\PP_{[m,\omega)}$
and $s \in T_{\dot z,m} \sem x + S$ such that $q \forces s \sub \dot z$. Then $q\forces \dot z \notin x + [S]$.
\end{proof}

\begin{cor}\label{cor:MBC_in_the_Cohen_model}
Assume $V \models GCH$. Add
$\kappa$
Cohen reals, $\kappa \geq \omega_2$, $\cf (\kappa) \geq \omega_1$.
Then the following holds in the generic extension $V_\kappa$.
Assume $X \sub \twoom$, $|X| = \aleph_1$, and let $G = \la X \ra$. There is a
dense $G$-matrix $\bar\T$ such that additionally for all $z \in \twoom$ there is $x_z \in X$ such that
$z \notin x_z + [T]$ for all $T \in \T$.
In particular, if $Y = \twoom \sem \bigcup \{[T]:T\in\T\}$, then
$Y \in s_0$ and $X + Y =\twoom$.
\end{cor}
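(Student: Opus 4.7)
\emph{Setup.} The plan is to construct $\bar\T$ and the assignment $z\mapsto x_z$ by a transfinite recursion of length $\kappa$ that iterates the First and Second Extension Lemmas, consuming one Cohen real per stage. Since $|X|=\aleph_1<\cf(\kappa)$ and Cohen forcing is ccc, $X$ lies in some intermediate model $V_{\alpha^*}$ with $\alpha^*<\kappa$; factoring the iteration at $\alpha^*$ still leaves a Cohen iteration of length $\kappa$ with cofinality at least $\omega_1$, so I may assume $X\in V$, and set $G=\langle X\rangle\in V$, of size $\aleph_1$. The hypothesis $\bb=\aleph_1$ used to define a $G$-matrix persists in $V_\kappa$ because Cohen forcing adds no dominating reals, so any ground-model unbounded sequence $(g_\alpha)_{\alpha<\omega_1}$ remains unbounded.

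\emph{Main construction.} By ccc and $\cf(\kappa)\geq\omega_1$, every real and every Sacks tree of $V_\kappa$ lies in some $V_\eta$ with $\eta<\kappa$. I fix a diagonal bookkeeping that enumerates all \emph{tasks}, where a task is either ``densify below $S$'' for a Sacks tree $S\in\SS\cap V_\eta$ or ``assign $x_z$'' for a real $z\in\twoom\cap V_\eta$, so that every Sacks tree and every real of $V_\kappa$ is addressed at some stage. Starting from $\bar\T^0:=\emptyset$, at each successor stage $\eta=\beta+1$ I use the Cohen real $c_\beta$ to perform the task assigned to stage $\eta$: either apply Lemma~\ref{firstextension} to adjoin some $T\leq S$ to $\bar\T^\beta$, producing $\bar\T^{\beta+1}$; or apply Lemma~\ref{secondextension1} to a $\CC$-name for the target real $z$ (which is new over $V_\beta$) to extract $x_z\in X$ with $z\notin x_z+[T']$ for all $T'\in\T^\beta$. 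At limits of countable cofinality I invoke Lemma~\ref{secondextension2} along a cofinal $\omega$-sequence to handle reals appearing only at the limit; at limits of uncountable cofinality, by ccc no new reals arise and I take unions.

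\emph{Verification and conclusion.} Setting $\bar\T:=\bigcup_{\eta<\kappa}\bar\T^\eta$, properties (i)--(iii) of a $G$-matrix are preserved at each stage by the extension lemmas, (iv) is automatic by the observation recorded after the definition, and density (v) follows from the exhaustive enumeration of Sacks trees. A subtle point is that $z\notin x_z+[T]$ must hold for \emph{all} $T\in\T$, in particular for trees $T$ added at stages $\eta'>\eta$ (after $x_z$ was chosen); this is automatic because such a $T$ is Cohen-generic over $V_{\eta'}$ and the ground-model real $z-x_z\in V_\eta\subseteq V_{\eta'}$ is avoided by a standard density argument. Letting $Y:=\twoom\setminus\bigcup\{[T]:T\in\T\}$, density of $\T$ in $\SS$ yields $Y\in s_0$ (any $p\in\SS$ has some $T\leq p$ in $\T$, hence $[T]\cap Y=\emptyset$), and for each $z\in\twoom$ the element $y:=z-x_z$ lies in $Y$, giving $z=x_z+y\in X+Y$ and so $X+Y=\twoom$.

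\emph{Main obstacle.} The delicate part is the bookkeeping, especially at countable-cofinality limits where one must present the construction as an increasing chain $(\bar\T^n)_{n<\omega}$ with $\bar\T^n\in V_n$ in order to apply the limit-step Second Extension Lemma. Once this architecture is in place, each individual step reduces to a direct invocation of Lemmas~\ref{firstextension}, \ref{secondextension1}, and \ref{secondextension2}, together with a standard Cohen-genericity argument for the trees added at later stages.
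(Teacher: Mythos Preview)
Your overall architecture is the same as the paper's --- iterate Lemma~\ref{firstextension} for density, use Lemmas~\ref{secondextension1} and~\ref{secondextension2} to pick $x_z$, and appeal to Cohen genericity of the new trees to handle reals that appeared earlier --- but there is a genuine gap in your bookkeeping.

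Lemma~\ref{secondextension1} applies only when $\dot z$ names a \emph{new} real over the model where you invoke it. Hence the ``assign $x_z$'' task for a given $z$ must be performed exactly at the first stage $\gamma$ with $z\in V_\gamma$. But at any successor stage $\beta+1$ there are continuum-many reals in $V_{\beta+1}\setminus V_\beta$, so a one-task-per-stage scheme cannot schedule them all at their first appearance; and if a given $z$ is dealt with later, at some $\eta>\gamma$, it is no longer new over $V_{\eta-1}$ and the lemma does not apply. Your parenthetical ``(which is new over $V_\beta$)'' is therefore an unjustified assumption. Your genericity argument for trees added at stages $\eta'>\eta$ is correct, but it does not repair this: the problem is the trees already present in $\bar\T^{\eta-1}$, not the future ones.

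The paper avoids this by decoupling the two processes and doing a \emph{two-pass} construction. In the first pass one densifies at every stage, getting $T_\gamma^*\in V_{\gamma+1}$. The assignment $z\mapsto x_z$ is then computed retroactively (in $V_\kappa$): for each $z$, locate the minimal $\gamma$ with $z\in V_\gamma$ and apply Lemma~\ref{secondextension1} or~\ref{secondextension2} there, against $\bar\T_\gamma^\star$. This leaves one bad case uncovered: when $z$ first appears in $V_{\gamma+1}$, the tree $T_\gamma^*$ is added at the \emph{same} step and has branches new over $V_\gamma$ but not over $V_{\gamma+1}$, so $z-x_z$ could lie in $[T_\gamma^*]$. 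The second pass fixes precisely this: using one more Cohen real, each $T_\gamma^*$ is thinned to $T_\gamma\in V_{\gamma+2}$ with $[T_\gamma]\cap V_{\gamma+1}=\emptyset$, which guarantees $z\notin x_z+[T_\gamma]$ for all $z\in V_{\gamma+1}$. Your single-pass outline needs an analogous device to close this ``same-stage'' gap.
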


\begin{proof}
Let $V_0 = V$ and denote by $V_\gamma$, $\gamma \leq \kappa$, the $\CC_\gamma$-generic extension.
W.l.o.g.\ $X \in V_0$.
Let $\{ S_\gamma : \gamma < \kappa \}$ list all Sacks trees in $V_\kappa$
such that $S_\gamma \in V_\gamma$ for all $\gamma < \kappa$.

First, using the first extension lemma (Lemma~\ref{firstextension}), we build $G$-matrices $\bar \T_\gamma^\star =
\la \T_{\alpha,\gamma}^\star : \alpha < \omega_1 \ra$ for $\gamma \leq \kappa$ with $\bar \T_\gamma^\star \in V_\gamma$, such
that
\begin{itemize}
\item $\T_{\alpha,\gamma}^\star \sub \T_{\alpha,\delta}^\star$ for $\gamma < \delta \leq\kappa$,
\item for some $T_\gamma^* \leq S_\gamma$ and some $\alpha = \alpha_\gamma < \omega_1$, $\T_{\alpha,\gamma + 1}^\star =
   \T_{\alpha,\gamma}^\star \cup \{ T_\gamma^* \}$ and $\T_{\beta,\gamma + 1}^\star =
   \T_{\beta,\gamma}^\star$ for $\beta \neq \alpha$,
\item $\T_{\alpha,\gamma}^\star = \bigcup_{\delta<\gamma} \T_{\alpha,\delta}^\star$ for limit $\gamma$.
\end{itemize}
Then clearly $\bar \T_\kappa^\star$ is a dense $G$-matrix.

Next, let $z \in \twoom$ (in $V_\kappa$). There is a minimal $\gamma < \kappa$ such that $z \in V_\gamma$,
and
this~$\gamma$ satisfies
$\cf (\gamma) \leq \omega$. If $\gamma = \delta +1$ is successor, apply Lemma~\ref{secondextension1}
to obtain $x_z \in X$ such that $z \notin x_z + [T]$ for all $T \in \T_\delta^\star$. If $\gamma$ is a limit
of countable cofinality, apply Lemma~\ref{secondextension2} to obtain $x_z \in X$ such that
$z \notin x_z + [T]$ for all $T \in \T_\gamma^\star = \bigcup_{\delta<\gamma}\T_\delta^\star$.

Finally, build $G$-matrices $\bar \T_\gamma = \la \T_{\alpha,\gamma} : \alpha < \omega_1 \ra$
for $\gamma \leq \kappa$ with $\bar \T_\gamma \in V_\gamma$, such that
\begin{itemize}
\item $\T_{\alpha,\gamma} \sub \T_{\alpha,\delta}$ for $\gamma<\delta\leq\kappa$,
\item for some $T_\gamma \leq T_\gamma^*$, $\T_{\alpha_\gamma,\gamma+2} = \T_{\alpha_\gamma,\gamma +1} \cup \{ T_\gamma \}$
   and $\T_{\beta,\gamma+2} = \T_{\beta,\gamma + 1}$ for $\beta\neq\alpha_\gamma$,
\item $\T_{\alpha,\gamma +1} = \T_{\alpha,\gamma} = \bigcup_{\delta<\gamma} \T_{\alpha,\delta}$ for limit $\gamma$,
\item for all $z \in V_{\gamma + 1}$, $z\notin x_z + [T_\gamma]$.
\end{itemize}

To do this, fix $\gamma$ and work in $V_{\gamma + 2}$.
Let $f \in \twoom$ be a new real: $f \in V_{\gamma+2} \sem V_{\gamma+1}$.
By taking a ``new part'' of a canonical partition of $T_\gamma^*$ into perfectly many perfect sets, we can get $T_\gamma \leq T_\gamma^*$ such that all branches through $T_\gamma$ are new, i.e., $[T_\gamma] \cap V_{\gamma+1} = \emptyset$.
More precisely,
let
$S^f$
be the tree consisting of those $s \in \twolom$ such that
$s(i) = f(i/2)$
for all even $i < |s|$;
let~$\iota$
be the canonical
bijection between $\twolom$ and the splitting nodes of $T_\gamma^*$, and
define~$T_\gamma$ to
be
the downward closure of
$\{ \iota(s) : s \in S^f \}$; clearly,
$T_\gamma \leq T_\gamma^*$, and all branches through $T_\gamma$ are new: if there were
$y \in [T_\gamma] \cap V_{\gamma+1}$, then $f$ could be computed from
$y$
and
$T_\gamma^* \in V_{\gamma+1}$
and so $f$ would belong to $V_{\gamma+1}$ as well.
In particular,
$z \notin x_z + [T_\gamma]$ for each $z \in V_{\gamma+1}$,
i.e.,
the last clause above is satisfied.
Again, $\bar \T = \bar\T_\kappa$ is still a dense $G$-matrix.

Now, let $z \in\twoom$ and $T \in \T$. Say $T = T_\gamma \in V_{\gamma+2}$ and $z$ first arises in $V_\delta$.
If $\delta \leq \gamma + 1$, then $z \notin x_z + [T_\gamma]$ by the previous paragraph.
If $\delta \geq \gamma+2$, then $z \notin x_z + [T_\gamma^*]$ by the construction of $x_z$ according to both
second extension
lemmas
(see the
third
paragraph above). A fortiori, $z \notin x_z + [T_\gamma]$.
\end{proof}

\end{document}